\newcommand{\NP}{\ensuremath{\mathsf{NP}}}
\newcommand{\book}[3]{{#1}, {\it #2}, #3.}
\newcommand{\article}[6]{{#1}, {#2}, {\it #3} {#4} (#5) #6.}
\newcommand{\incollection}[3]{{#1}, {#2}, {#3}.}
\newtheorem{theorem}{Theorem}[section]
\newtheorem{lemma}[theorem]{Lemma}
\newtheorem{corollary}[theorem]{Corollary}
\newtheorem{proposition}[theorem]{Proposition}
\newtheorem{definition}{Definition}
\newtheorem{problem}{Problem}
\newtheorem{remark}{Remark}
\newtheorem{example}{Example}
\title{$\mbox{Total Domishold Graphs: a Generalization of Threshold Graphs,}$\\ $\mbox{with Connections to Threshold Hypergraphs}$\thanks
{This work is supported in part by the
Slovenian Research Agency, research program P$1$--$0285$ and research projects
J$1$--$4010$, J$1$--$4021$ and N$1$--$0011$:
GReGAS, supported in part by the European Science Foundation. Some results from this paper
appeared as extended abstract in the proceedings of the conference WG $2013$~\cite{ChiMil13}.}}
\author{Nina Chiarelli\\
\small University of Primorska, UP FAMNIT, Glagolja\v ska 8, SI6000 Koper, Slovenia\\
\small \texttt{nina.chiarelli@student.upr.si}\\
\and
Martin Milani\v c\thanks{Corresponding author.}\\
\small University of Primorska, UP IAM, Muzejski trg 2, SI6000 Koper, Slovenia\\
\small University of Primorska, UP FAMNIT, Glagolja\v ska 8, SI6000 Koper, Slovenia\\
\small \texttt{martin.milanic@upr.si}
}
\date{\today}
\begin{document}
\maketitle

\begin{abstract}
A total dominating set in a graph is a set of vertices such that every vertex
of the graph has a neighbor in the set. We introduce and study graphs that admit
non-negative real weights associated to their vertices such that a set of vertices is a total dominating set if and only if the sum
of the corresponding weights exceeds a certain threshold.
We show that these  graphs, which we call total domishold graphs,
form a non-hereditary class of graphs properly containing the classes of threshold graphs and the complements of domishold graphs, and
are closely related to threshold Boolean functions and threshold hypergraphs.
We present a polynomial time recognition algorithm of total domishold graphs, and characterize
graphs in which the above property holds in a hereditary sense. Our characterization is obtained by studying a new family of hypergraphs,
defined similarly as the Sperner hypergraphs, which may be of independent interest.

\medskip

{\bf Keywords:}
total domination; total domishold graph; split graph; dually Sperner hypergraph; threshold hypergraph; threshold Boolean function; forbidden induced subgraph characterization.

\smallskip {\bf Math.~Subj.~Class.}~(2010): 05C69
\end{abstract}

\section{Introduction and Background}

A possible approach for dealing with the intractability of a given decision or optimization problem is to identify restrictions on input instances under which the problem can still be solved efficiently. One generic framework for describing a kind of such restrictions
in case of graph problems is the following: Given a graph $G$, does $G$ admit non-negative integer
weights on its vertices (or edges, depending on the problem) and a set $T$ of integers such that a subset $X$ of its vertices (or edges) has property $P$
if and only if the sum of the weights of elements of $X$ belongs to $T$? Property $P$ can denote any of the desired substructures we are looking for, such as matchings, cliques, stable sets, dominating sets, etc. If weights as above exist and  are given with the graph, and the set $T$ is given by
a membership oracle, then a dynamic programming algorithm can be employed to
find a subset with property $P$ of either maximum or minimum cost (according to a given cost function on the vertices/edges)
in $O(nM)$ time and  with $M$ calls of the membership oracle, where $n$ is the number of vertices (or edges) of $G$ and $M$ is a given upper bound for $T$~\cite{MOR}.

The advantages of the above framework depend both on the choice of property $P$ and on the constraints (if any) imposed on the structure of the set $T$. For example, if $P$ denotes the property of being a stable (independent) set,
and set $T$ is restricted to be an interval unbounded from below---equivalently, $T$ is of the form
of the form $[0,t]$ for some non-negative integer $t$---, we obtain the class of {\it threshold graphs}~\cite{CH77}, which
is very well understood and admits several characterizations and linear time algorithms for recognition and for several optimization problems~\cite{MP}. If $P$ denotes the property of being a dominating set and $T$ is an interval
unbounded from above, that is, it is of the form $T = [t, \infty)$ for some non-negative integer $t$,
we obtain the class of {\it domishold graphs}~\cite{BH78}, which enjoy similar properties as the threshold graphs. On the other hand, if $P$ is the property of being a {\it maximal} stable set and $T$ is restricted to consist of a single number, we obtain the class of {\it equistable graphs}~\cite{Pay}, for which the recognition complexity is open (see, e.g.,\cite{LMT}), no structural characterization is known, and several \NP-hard optimization problems remain intractable on this class. For instance, it is shown in~\cite{MOR} that the maximum independent set and the minimum independent dominating set problems are APX-hard in the class of equistable graphs.

As the above examples show, the resulting class of graphs can be either {\it hereditary} (that is, closed under vertex deletion)---as in the case of threshold or domishold graphs---, or not---as in the case of equistable graphs. When the resulting graph class is not hereditary, it is natural to consider the hereditary version of the property, in which the requirement (the existence of weights and the set $T$) is extended to all induced subgraphs of the given graph.

In this paper, we introduce and study the case when $P$ is the property of being a total dominating set and $T$ is an interval unbounded from above. Given a graph $G = (V,E)$, a {\it total dominating set} (a TD set, for short) is a subset $D$ of the vertices of $G$ such that every vertex of $G$ has a neighbor in $D$. This notion has been extensively studied in the literature, see, e.g.~the monographs~\cite{HHS1-98,HHS2-98,Henning-Yeo} and the survey paper~\cite{Henning}.

\begin{definition}
A graph $G=(V,E)$ is said to be {\em total domishold} (TD for short)
if there exists a pair $(w,t)$ where $w:V\to \mathbb{R}_+$ is a weight function and $t\in \mathbb{R}_+$ is a threshold such that
for every subset $S\subseteq V$, $w(S):= \sum_{x\in S}w(x)\ge t$ if and only if $S$ is a total dominating set in $G$.
A pair $(w,t)$ as above will be referred to as a {\em total domishold structure} of $G$.
\end{definition}

Notice that the above definition allows $G$ to have isolated vertices. Every graph with an isolated vertex is total domishold, even though it does not have any TD sets.

\begin{example}
The complete graph of order $n$ is total domishold. Indeed, a subset $S\subseteq V(K_n)$ is a total dominating set of $K_n$ if and only if $S$ is of size at least two, and consequently the pair $(w,2)$ where $w(x) = 1$ for all $x\in V(K_n)$ is a total domishold structure of $K_n$.
On the other hand, the $4$-cycle $C_4$ is not a total domishold graph (cf.~Theorem~\ref{prop:forbidden-induced-subgraphs} in Section~\ref{sec:HTD-partial-characterizations}).
\end{example}

It is easy to see that adding a new vertex to the $4$-cycle and connecting it to exactly one vertex of the cycle results in a total domishold graph.
Therefore, contrary to the classes of threshold and domishold graphs, the class of total domishold graphs is not hereditary.
This motivates the following definition:

\begin{definition}
A graph $G$ is said to be  {\em hereditary total domishold} ({HTD} for short) if every induced subgraph of it is total domishold.
\end{definition}

We initiate the study of TD and HTD graphs. We identify several operations preserving the class of TD graphs, which, together with results from the literature~\cite{BH78, CH77}, imply that the class of HTD graphs properly contain the classes of threshold graphs and the complements of domishold graphs (Section~\ref{sec:properties}). We exhibit close relationships between total domishold graphs, threshold Boolean functions and threshold hypergraphs (in Section~\ref{sec:TD-graphs-BFs-hypergraphs}), and use them to develop a polynomial time recognition algorithm for total domishold graphs, a polynomial time algorithm for the total dominating set problem in the class of total domishold graphs, and a $2$-approximation algorithm for the dominating set problem in the class of total domishold graphs (in Section~\ref{sec:algorithmic-aspects}).
As our main result, we completely characterize the HTD graphs (in Section~\ref{sec:HTD-partial-characterizations}), both in terms of forbidden induced subgraphs, and in terms of properties of certain Boolean functions derived from induced subgraphs of the graph.
We conclude the paper with  some  open problems in Section~\ref{sec:conclusion}.

\section{Preliminaries and Notation}

In this section we recall some useful definitions and results.
For a positive integer $n$, we will use the notation $[n]$ for the set $\{1,\ldots, n\}$.

\medskip
\noindent {\bf Graphs and graph classes.}
A {\it clique} in a graph is a set of pairwise adjacent vertices, and an {\it independent} (or {\it stable}) set is a set of pairwise non-adjacent vertices.
A graph $G$ is {\it chordal} if it does not contain any induced cycle of order at least $4$,
{\it split} if its vertex set can be partitioned into a clique and an independent set, and
{\it $(1,2)$-polar} if it admits a partition of its vertex set into two (possibly empty) parts $K$ and $L$ such that
$K$ is a clique and $L$ induces a subgraph of maximum degree at most~$1$.
For a set of graphs ${\cal F}$, a graph $G$ is said to be {\it ${\cal F}$-free} (or just {\it $F$-free} if ${\cal F} = \{F\}$),
if it does not contain any induced subgraph isomorphic to a member of ${\cal F}$.
Every member of ${\cal F}$ is said to be a {\it forbidden induced subgraph} for the (hereditary) set of ${\cal F}$-free graphs.
The neighborhood of a vertex $v$ in a graph will be denoted by $N_G(v)$, and its closed neighborhood by $N_G[v] := N_G(v)\cup \{v\}$,
omitting the subscript $G$ if the graph is clear from the context.
For a subset $U\subseteq V(G)$ of vertices, we denote by $N_G(U)$ (or just by $N(U)$) the set $\{w\in V(G)\setminus U\mid (\exists u\in U)(uw\in E(G))\}$.
A vertex in a graph $G$ is {\it universal} if it is adjacent to every other vertex in $G$ and {\it isolated} if it is of degree~$0$.
By $G+H$ we will denote the disjoint union of graphs $G$ and $H$.
The {\it join} of graphs $G$ and $H$ is the graph obtained from the disjoint union $G+H$ by adding all edges of the form $\{uv\mid u\in V(G),~v\in V(H)\}$.
For a graph $G$, we denote by $2G$ the disjoint union of two copies of $G$. We denote by $K_n$, $P_n$ and $C_n$ the complete graph, the path and the cycle on $n$ vertices.

The following characterization of threshold graphs due to Chv\'atal and Hammer will be used in some of our proofs.

\begin{sloppypar}
\begin{theorem}[Chv\'atal and Hammer~\cite{CH77}]\label{thm:ChvatalHammer}
For every graph $G$ the following conditions are equivalent:
\begin{enumerate}[(a)]
  \item\label{itemCHa} $G$ is threshold.
  \item\label{itemCHb} $G$ has no induced subgraph isomorphic to $2K_{2},C_{4}$ or $P_{4}$.
  \item\label{itemCHd} There is a partition of $V$ into two disjoint sets $I,K$ and an ordering $u_{1},u_{2},\ldots,u_k$
of vertices in $I$ such that no two vertices in $I$ are adjacent, every two vertices in $K$ are
adjacent, and $N(u_{1})\subseteq N(u_{2})\subseteq\ldots\subseteq N(u_k)$.
Without loss of generality, we will assume that $I$ is a maximal independent set of $G$, that is, that $\cup_{u\in I}N(u) = K$.
\end{enumerate}
\end{theorem}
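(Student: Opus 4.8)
The plan is to establish the cyclic chain $(\ref{itemCHa})\Rightarrow(\ref{itemCHb})\Rightarrow(\ref{itemCHd})\Rightarrow(\ref{itemCHa})$, keeping in mind that here ``$G$ is threshold'' means that there exist weights $w:V\to\mathbb{R}_+$ and a threshold $t$ such that a set $S$ is independent if and only if $w(S)\le t$.

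For $(\ref{itemCHa})\Rightarrow(\ref{itemCHb})$ I would first note that the class of threshold graphs is hereditary: restricting a threshold structure $(w,t)$ of $G$ to an induced subgraph $G[U]$ yields a threshold structure of $G[U]$, since a subset of $U$ is independent in $G[U]$ iff it is independent in $G$. It then suffices to check that none of $2K_2$, $C_4$, $P_4$ is threshold, each by a short averaging argument over a hypothetical threshold structure $(w,t)$. For $2K_2$ with edges $a_1b_1$ and $a_2b_2$, the sets $\{a_1,a_2\}$ and $\{b_1,b_2\}$ are independent while $\{a_1,b_1\}$ and $\{a_2,b_2\}$ are not, so adding $w(a_1)+w(a_2)\le t$ and $w(b_1)+w(b_2)\le t$ against $w(a_1)+w(b_1)>t$ and $w(a_2)+w(b_2)>t$ gives $2t\ge w(a_1)+w(a_2)+w(b_1)+w(b_2)>2t$; $C_4$ is handled identically using its two diagonals against a perfect matching, and for $P_4=1\text{-}2\text{-}3\text{-}4$ the independent pairs $\{1,3\},\{2,4\}$ together with the edges $\{1,2\},\{3,4\}$ force both $w(2)>w(3)$ and $w(3)>w(2)$.

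For $(\ref{itemCHb})\Rightarrow(\ref{itemCHd})$ I would work with the \emph{vicinal preorder} $\preceq$, where $u\preceq v$ means $N(u)\setminus\{v\}\subseteq N(v)\setminus\{u\}$. The crucial step, and the one I expect to be the main obstacle, is to show that $\{2K_2,C_4,P_4\}$-freeness makes $\preceq$ \emph{total}: if $u,v$ are incomparable, then choosing $a\in N(u)\setminus N(v)$ and $b\in N(v)\setminus N(u)$ (both distinct from $u,v$) and inspecting the four-vertex set $\{u,v,a,b\}$ produces a forbidden subgraph in each case, according to whether the edges $uv$ and $ab$ are present (neither gives $2K_2$, exactly one gives $P_4$, both give $C_4$). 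Granting totality, I would next show that $G$ is split: if $G$ has no isolated vertex, a global $\preceq$-maximum $v$ must be universal, for otherwise a non-neighbour $u$ of $v$ has a neighbour $a$, and then $u\preceq v$ gives $a\in N(v)$, after which $a\preceq v$ gives $u\in N(v)$, a contradiction. An easy induction (deleting an isolated vertex, placed into $I$, or a universal vertex, placed into $K$) then yields a partition of $V$ into an independent set $I$ and a clique $K$. Finally, ordering $I$ by $\preceq$ gives the nesting for free: for $u\preceq u'$ in $I$, independence yields $N(u)=N(u)\setminus\{u'\}\subseteq N(u')\setminus\{u\}=N(u')$, and enlarging $I$ to a maximal independent set preserves this while ensuring $\cup_{u\in I}N(u)=K$.

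For $(\ref{itemCHd})\Rightarrow(\ref{itemCHa})$ I would read off weights from the resulting staircase. Writing $I=\{u_1,\dots,u_k\}$ with $N(u_1)\subseteq\cdots\subseteq N(u_k)$ and, for $c\in K$, $f(c)=\min\{i:c\in N(u_i)\}$, I set $t=1$, $w(u_i)=\varepsilon\,2^i$ with $\varepsilon$ small enough that $w(I)<1/2$, and $w(c)=1-\sum_{j<f(c)}w(u_j)$. Since $2^i>2^1+\cdots+2^{i-1}$, every maximal independent set, namely $I$ itself and each $\{c\}\cup\{u_j:j<f(c)\}$, has weight $\le t$, while every edge has endpoint-weight sum exceeding $t$ (inside $K$ because each $w(c)>1/2$, and between $u_i$ and $c$ with $i\ge f(c)$ because $w(u_{f(c)})+w(c)>1$). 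As nonnegativity makes every independent set lie below some maximal one and every non-independent set contain an edge, $(w,t)$ is a threshold structure, closing the cycle.
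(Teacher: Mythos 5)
This statement is quoted in the paper from Chv\'atal and Hammer~\cite{CH77} and is not proved there, so there is no internal proof to compare against; your argument has to stand on its own, and it does. It is correct, and it follows what is essentially the classical route: heredity of thresholdness plus a summing argument to kill $2K_2$, $C_4$ and $P_4$ (this is exactly the $2$-summability obstruction that the paper itself exploits later, in the proof of Theorem~\ref{prop:forbidden-induced-subgraphs}); totality of the vicinal preorder as the key combinatorial lemma for (b)$\Rightarrow$(c), with the four-vertex case analysis on $\{u,v,a,b\}$ complete and correct; and an explicit weight construction for (c)$\Rightarrow$(a), which checks out because the maximal independent sets are precisely $I$ and the sets $\{c\}\cup\{u_j: j<f(c)\}$ for $c\in K$, all of weight at most $1$, while every edge has weight sum at least $1+2\varepsilon$ (edges meeting $I$) or greater than $1$ (edges inside $K$).

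Two small points you should make explicit. First, your argument for a $\preceq$-maximum vertex tacitly uses that the vicinal relation is \emph{transitive}: a total but non-transitive relation on a finite set need not have a maximum. Transitivity of $\preceq$ is a standard, graph-independent fact, but it needs a line of verification (a short case analysis on whether the intermediate vertex is a neighbor). Second, your (c)$\Rightarrow$(a) step needs $f(c)=\min\{i: c\in N(u_i)\}$ to be defined for every $c\in K$, i.e., it uses the maximality clause of condition (c). This is harmless in your cyclic scheme, since your (b)$\Rightarrow$(c) step delivers a maximal $I$ (your enlargement argument is sound: a vertex $c\in K$ with no neighbor in $I$ has $N(c)=K\setminus\{c\}\supseteq N(u_i)$ for all $i$, so it can be appended to the chain); alternatively one can avoid the issue entirely by setting $f(c)=k+1$, i.e., $w(c)=1-w(I)>1/2$, for such $c$, and the same verification goes through. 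A cosmetic remark: the paper's framework defines threshold graphs via non-negative \emph{integer} weights; choosing $\varepsilon$ rational (say $\varepsilon=2^{-k-2}$) and clearing denominators converts your real-valued structure into an integral one.
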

\end{sloppypar}

\medskip
\noindent {\bf Boolean functions.}
Let $n$ be  positive integer. Given two vectors $x,y\in \{0,1\}^n$, we write $x\le y$ if $x_i\le y_i$ for all $i\in [n] = \{1,\ldots, n\}$.
A Boolean function $f:\{0,1\}^n\to\{0,1\}$ is {\it positive} (or: {\it monotone}) if $f(x)\le f(y)$ holds for
every two vectors $x,y\in \{0,1\}^n$ such that $x\le y$.
A positive Boolean function $f$ is said to be {\it threshold} if there exist non-negative real weights $w=(w_1,\ldots, w_n)$ and a non-negative real number $t$ such that
for every $x\in \{0,1\}^n$, $f(x) = 0$ if and only if $\sum_{i = 1}^nw_ix_i\le t$~(see, e.g.,~\cite{CH11}).
Such a pair $(w,t)$ is called a {\it separating structure} of $f$. Every threshold positive Boolean function admits
an integral separating structure~\cite{CH11}.

Threshold Boolean functions have been characterized by Chow~\cite{Chow} and Elgot~\cite{Elgot}, as follows. For $k\ge 2$, a positive Boolean function $f: \{0,1\}^n\to\{0,1\}$ is said to be {\it $k$-summable} if, for some $r \in \{2,\ldots,k\}$,  there exist $r$ (not necessarily distinct) false points of $f$, say,
$x^1, x^2,\ldots, x^r$, and $r$ (not necessarily distinct) true points of $f$, say $y^1, y^2,\ldots, y^r$, such that
$\sum_{i=1}^r x^i = \sum_{i=1}^r y^i$. (A
{\it false point} of $f$ is an input vector $x\in \{0,1\}^n$ such that $f(x) = 0$; a {\it true point} is defined analogously.) Function $f$ is said to be {\it $k$-asummable} if it is not $k$-summable, and it is {\it asummable} if it is $k$-asummable for all $k \geq 2$.

\begin{theorem}[Chow~\cite{Chow}, Elgot~\cite{Elgot}, see also Theorem 9.14 in~\cite{CH11}]\label{thm:BF:characterization}
A positive Boolean function $f$ is threshold if and only if it is asummable.
\end{theorem}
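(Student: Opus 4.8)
The statement splits into two implications, which I would handle separately; the forward implication is elementary while the converse is the substantial one and follows the classical Chow--Elgot line of argument via a theorem of the alternative.

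\emph{Threshold implies asummable.} This is the direct direction. Suppose $f$ admits a separating structure $(w,t)$, so that $\langle w,x\rangle\le t$ for every false point $x$ and $\langle w,y\rangle>t$ for every true point $y$. If $f$ were $k$-summable, there would exist false points $x^1,\dots,x^r$ and true points $y^1,\dots,y^r$ with $2\le r\le k$ and $\sum_{i=1}^r x^i=\sum_{i=1}^r y^i$. Taking the inner product of both sides with $w$ gives $\sum_{i=1}^r\langle w,x^i\rangle=\sum_{i=1}^r\langle w,y^i\rangle$, so that $rt\ge\sum_{i=1}^r\langle w,x^i\rangle=\sum_{i=1}^r\langle w,y^i\rangle>rt$, a contradiction. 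Hence no such summation exists and $f$ is asummable.

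\emph{Asummable implies threshold.} Here the plan is to reformulate ``$f$ is threshold'' as the feasibility of a linear system and then apply Farkas' lemma. Writing $T$ and $F$ for the sets of true and false points, being threshold amounts to the existence of $w\in\mathbb{R}^n$ and $t\in\mathbb{R}$ with $\langle w,y\rangle\ge t+1$ for all $y\in T$, $\langle w,x\rangle\le t$ for all $x\in F$, and $w\ge 0$; the ``$+1$'' encodes the required strict inequality and is harmless after rescaling, since there are only finitely many points. By Farkas' lemma (equivalently Motzkin's transposition theorem), this system is \emph{infeasible} if and only if there exist non-negative reals $\alpha_y$ $(y\in T)$ and $\beta_x$ $(x\in F)$, not all zero, with $\sum_y\alpha_y=\sum_x\beta_x$ and $\sum_y\alpha_y\,y\le\sum_x\beta_x\,x$ coordinatewise. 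It is worth noting that the coordinatewise \emph{inequality} here, rather than the equality one gets in the classical unsigned setting, is exactly the contribution of the sign constraints $w\ge 0$ to the dual system. Thus the whole argument reduces to showing that such a dual certificate exists if and only if $f$ is summable.

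Converting the (fractional) dual certificate into a genuine summability witness is the crux, and I expect it to be the main obstacle. One direction is immediate: a summability relation $\sum x^i=\sum y^i$ yields a certificate by letting $\alpha_y,\beta_x$ record multiplicities. For the other direction, since the dual system is rational, a non-zero solution can be chosen rational and then scaled to non-negative integers $\alpha_y,\beta_x$ with common total $r:=\sum_y\alpha_y=\sum_x\beta_x>0$; reading these as multiplicities produces $r$ true points and $r$ false points (with repetition) satisfying $\sum(\text{true})\le\sum(\text{false})$. I would then use monotonicity to upgrade this inequality to the equality demanded by the definition of summability: setting $d:=\sum(\text{false})-\sum(\text{true})\ge 0$, a coordinatewise count shows that in each coordinate $k$ at least $d_k$ of the chosen true points carry a $0$ there (because $\sum(\text{false})_k\le r$), and flipping those entries to $1$ keeps each point true by positivity of $f$; absorbing all of $d$ in this way yields modified true points whose sum equals $\sum(\text{false})$. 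Since $r=1$ is impossible (a vector cannot be simultaneously true and false), we obtain $r\ge 2$, so $f$ is $r$-summable. Combining the two directions, infeasibility of the linear system is equivalent to summability, whence asummability furnishes a feasible $(w,t)$ with $w\ge 0$, i.e.\ a separating structure, so $f$ is threshold. A final routine check disposes of the degenerate constant functions and confirms that the threshold may be taken non-negative.
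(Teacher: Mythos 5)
The paper offers no proof of this statement: Theorem~\ref{thm:BF:characterization} is imported from the literature (Chow, Elgot; see Theorem~9.14 in Crama--Hammer), so there is no in-paper argument to compare yours against, and your proposal must be judged on its own merits. On those merits it is correct, and it is essentially the classical proof of the Chow--Elgot theorem, sensibly adapted to the positive setting. The forward direction (pairing a separating structure against a hypothetical summability relation to get $rt \ge \sum_i \langle w,x^i\rangle = \sum_i \langle w,y^i\rangle > rt$) is the standard computation. For the converse, your encoding of thresholdness as feasibility of the finite system $\langle w,y\rangle\ge t+1$ for true points, $\langle w,x\rangle\le t$ for false points, $w\ge 0$, is right, and your dual system is the correct one: with the multipliers $\mu_i\ge 0$ of the constraints $w_i\ge 0$ eliminated, the Gale/Farkas alternative gives exactly $\sum_y\alpha_y=\sum_x\beta_x>0$ together with the coordinatewise \emph{inequality} $\sum_y\alpha_y\,y\le\sum_x\beta_x\,x$, as you say. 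The rationalization-and-scaling step is standard, and your monotonicity trick closes the real gap this inequality creates: in each coordinate $k$ the deficiency $d_k$ satisfies $d_k\le r-\sum_i y^i_k$ because $\sum_i x^i_k\le r$, so $d_k$ of the true points can have their $k$-th entry flipped from $0$ to $1$ while staying true, which restores the exact equality required by $k$-summability; and $r=1$ is correctly excluded. This is a clean way of handling the sign constraints directly in the primal, where the textbook treatment instead proves the theorem for unrestricted weights and invokes separately the fact that positive threshold functions admit non-negative weights; the two routes are of comparable length and yours is self-contained.

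One caveat, which is a defect of the paper's definition rather than of your argument: the paper insists that the threshold $t$ be non-negative. For every $f\not\equiv 1$ this costs nothing, since the zero vector is then a false point and forces $t\ge\langle w,\mathbf{0}\rangle=0$, so your ``routine final check'' does go through. But the constant function $f\equiv 1$ has no false points, hence is vacuously asummable, yet admits no separating structure with $t\ge 0$ (the zero vector would need $0>t$). Under the standard Crama--Hammer definition, with $t$ an arbitrary real, this degenerate case disappears and your proof is complete; you should simply flag that your LP formulation (and the theorem itself) presupposes that convention.
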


The problem of determining whether a positive Boolean function
given by its complete DNF is threshold is solvable in polynomial time, using dualization and linear programming.
This result is summarized in the following theorem.

\begin{sloppypar}
\begin{theorem}[Peled and Simeone~\cite{PS85}, see also Theorem 9.16 in~\cite{CH11}]\label{thm:threshold-Bfs}
~\\There exists a polynomial time
algorithm that determines, given the complete DNF of a positive
Boolean function $f(x_1,\ldots, x_n)$,
whether $f$ is threshold. If this is the case, the algorithm also computes an integral separating structure of~$f$.
\end{theorem}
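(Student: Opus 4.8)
The plan is to reduce the recognition of thresholdness to a linear programming feasibility question and then to invoke a polynomial-time linear programming algorithm. By definition, $f$ is threshold precisely when there exist $w\in\mathbb{R}_+^n$ and $t\in\mathbb{R}_+$ such that $\sum_i w_i x_i\le t$ for every false point $x$ and $\sum_i w_i x_i> t$ for every true point $x$. The first observation is that, because $f$ is positive and the weights are non-negative, it suffices to impose these inequalities on the \emph{minimal} true points and the \emph{maximal} false points only: if $y\le y'$ then $\sum_i w_i y_i\le\sum_i w_i y'_i$, so a strict inequality holding at a minimal true point holds at every true point above it, and dually for false points. The minimal true points are exactly the prime implicants of $f$, which can be read directly off the given complete DNF.

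Next I would convert the separation into an explicit system of linear inequalities. Replacing each strict inequality by a unit gap is legitimate by homogeneity, since any separating structure can be rescaled so that the least true-point value exceeds the greatest false-point value by at least $1$. This yields the system consisting of $\sum_i w_i a_i\ge t+1$ for every minimal true point $a$, $\sum_i w_i b_i\le t$ for every maximal false point $b$, together with $w\ge 0$ and $t\ge 0$, and this system is feasible if and only if $f$ is threshold. All coefficients are $0/1$ and all right-hand sides are integral, so any basic feasible solution is rational with polynomially bounded denominators; clearing denominators produces an integral separating structure. This simultaneously settles the second assertion of the theorem and reproves that a threshold positive function admits an integral separating structure.

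Rational linear programs can be solved, and infeasibility detected, in polynomial time, so all that remains is to supply the constraints efficiently. The minimal true points come for free from the input, and this is where I expect the only real difficulty to lie: the maximal false points are the complements of the prime implicants of the dual function $f^d$, and their number can be exponential in the length of the complete DNF (already a DNF formed by $k$ pairwise disjoint quadratic terms has $2^k$ maximal false points). Thus one cannot simply enumerate the false-point constraints and write the program down, and circumventing this blow-up is the technical heart of the theorem.

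To see what such a circumvention must achieve, note that by LP duality (Farkas' lemma) the separation system above is infeasible exactly when some non-negative combination of its inequalities is contradictory; unwinding this certificate yields, after using the monotonicity of $f$, false points $x^1,\dots,x^r$ and true points $y^1,\dots,y^r$ with $\sum_{j=1}^r x^j=\sum_{j=1}^r y^j$, that is, a summability relation in the sense of Theorem~\ref{thm:BF:characterization}. Hence LP feasibility is equivalent to asummability, in agreement with the Chow--Elgot characterization. The delicate part of the Peled--Simeone argument is therefore to decide feasibility, or exhibit such a certificate, using only polynomially much computation despite the exponentially many maximal false points -- for instance by reformulating the false-point requirements so that a polynomial-size subsystem already certifies thresholdness -- and establishing that this procedure is both correct and runs in polynomial time is the crux.
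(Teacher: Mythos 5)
This is a cited result (Peled--Simeone), so the paper itself contains no proof to compare against; judged on its own terms, your proposal sets up the right framework but stops exactly at the point where the actual theorem lives, and you say so yourself. Reducing thresholdness to feasibility of the linear system over minimal true points and maximal false points is correct, as is the unit-gap normalization, the identification of minimal true points with the terms of the complete DNF, and the extraction of an integral structure from a rational basic feasible solution. But without a polynomial bound on the number of maximal false points \emph{and} a polynomial-time way to list them, there is no LP of polynomial size to hand to the solver, so what you have is an exponential-time algorithm plus an accurate diagnosis of the difficulty. (Note also that your appeal to ``polynomially bounded denominators of basic feasible solutions'' implicitly presupposes the LP has polynomial size, so that step too hangs on the unresolved enumeration issue.)

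The missing idea that closes the gap is \emph{regularity} (2-monotonicity). Every threshold function is regular: its variables are linearly ordered by strength, in the sense that for $i$ stronger than $j$, swapping a $1$ in position $j$ for a $1$ in position $i$ cannot turn a true point false. Regularity of a positive function can be tested in polynomial time directly from its complete DNF (one compares prime implicants under the exchange of pairs of variables), so a non-regular input is rejected immediately as non-threshold. The crucial theorem of Peled and Simeone is that \emph{regular} functions can be dualized in polynomial time: the prime implicants of $f^d$ --- equivalently, the complements of the maximal false points of $f$ --- can be generated from the complete DNF of $f$ with only a polynomial overhead, their number being polynomially bounded in $n$ and the number of prime implicants of $f$. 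Only after this step does your LP become an object of polynomial size, at which point the rest of your argument (solve the LP, round a basic solution to an integral separating structure) goes through as written. So the architecture of your proof matches the known one --- dualization plus linear programming, exactly as the paper remarks --- but the combinatorial engine (regularity testing and polynomial dualization of regular functions) is the heart of the result, and it is absent from your write-up.
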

\end{sloppypar}

The existence of a {\it purely combinatorial} polynomial time algorithm for recognizing threshold positive Boolean functions given by their complete DNF is an
open problem~\cite{CH11}. (By a ``purely combinatorial algorithm'' we mean an algorithm that does not rely on solving a linear program.)
Notice that a combinatorial algorithm with exponential running time follows from Theorem~\ref{thm:BF:characterization}.

For further definitions related to Boolean functions, we refer to the monograph by Crama and Hammer~\cite{CH11}.

\medskip
\noindent {\bf Hypergraphs.}
A {\it hypergraph} is a pair $\cal H = (V,E)$ where $\cal V$ is a finite set of {\em vertices} and $\cal E$ is a finite set of subsets of $\cal V$, called {\em (hyper)edges}~\cite{Berge}. For our purposes, we will allow multiple edges (unless specified otherwise), that  is, $\cal E$ can be a multiset.
A hypergraph $\cal H = (V,E)$ is {\em threshold} if there exist a weight function $w:{\cal V}\to \mathbb{R}_+$ and a threshold $t\in \mathbb{R}_+$
such that for all subsets $X\subseteq {\cal V}$, it holds that $w(X)\le t$ if and only if $X$ contains no edge of $\cal H$~\cite{Gol}.
Such a pair $(w,t)$ is called a {\it separating structure} of $\cal H$.

To every hypergraph $\cal H = (V,E)$, we can naturally associate a positive Boolean function $f_{\cal H}:\{0,1\}^{\cal V}\to\{0,1\}$, defined by the positive DNF expression
$$f_{\cal H}(x) = \bigvee_{e\in {\cal E}} \bigwedge_{u\in e}x_u$$ for all $x\in \{0,1\}^{\cal V}$.
Conversely, to every positive Boolean function $f:\{0,1\}^n\to \{0,1\}$ given by a positive  DNF $\phi = \bigvee_{j = 1}^m \bigwedge_{i\in C_j}x_i$, we can associate a hypergraph ${\cal H}(\phi) = {\cal (V,E)}$ as follows: ${\cal V} = [n]$ and ${\cal E} = \{C_1,\ldots, C_m\}$.
It follows directly from the definitions that the thresholdness of hypergraphs and of Boolean functions are related as follows.

\begin{proposition}\label{prop:relation}
A hypergraph $\cal H = (V,E)$ is threshold if and only if the positive Boolean function $f_{\cal H}$ is threshold.
A positive Boolean function given by a positive DNF $\phi = \bigvee_{j = 1}^m \bigwedge_{i\in C_j}x_i$ is threshold if and only if
the hypergraph ${\cal H}(\phi)$ is threshold.
\end{proposition}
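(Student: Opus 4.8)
The plan is to prove both equivalences by unwinding the two definitions and matching them through the natural bijection between subsets of the vertex set and their characteristic vectors. First I would fix the correspondence $X \mapsto \chi_X$ between subsets $X \subseteq {\cal V}$ and vectors $x = \chi_X \in \{0,1\}^{\cal V}$, under which the support of $x$ equals $X$; this is a bijection, and it carries the weight/threshold constraints of one definition exactly onto those of the other, since both require $w:{\cal V}\to\mathbb{R}_+$ and $t\in\mathbb{R}_+$. Under this identification, for any such $w$ one has $\sum_{u \in {\cal V}} w(u)\, x_u = w(X)$, so the linear form in the definition of a threshold Boolean function is literally the weight $w(X)$ in the definition of a threshold hypergraph.

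Next I would translate the combinatorial meaning of $f_{\cal H}$. Directly from the defining DNF $f_{\cal H}(x) = \bigvee_{e \in {\cal E}} \bigwedge_{u \in e} x_u$, we have $f_{\cal H}(\chi_X) = 1$ precisely when some edge $e \in {\cal E}$ satisfies $x_u = 1$ for all $u \in e$, that is, when $e \subseteq X$ for some $e$; equivalently, $f_{\cal H}(\chi_X) = 0$ if and only if $X$ contains no edge of ${\cal H}$. I would remark that this holds regardless of whether ${\cal E}$ is a multiset or contains edges comparable under inclusion, since repeated or non-minimal edges leave both the function and the condition ``$X$ contains no edge'' unchanged.

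With these two observations in hand, the first equivalence is immediate: a pair $(w,t)$ is a separating structure of ${\cal H}$ --- meaning $w(X) \le t \iff X$ contains no edge of ${\cal H}$, for all $X \subseteq {\cal V}$ --- if and only if the very same pair satisfies $\sum_{u} w(u)\, x_u \le t \iff f_{\cal H}(x) = 0$ for all $x \in \{0,1\}^{\cal V}$, i.e.\ is a separating structure of $f_{\cal H}$. Hence ${\cal H}$ is threshold exactly when $f_{\cal H}$ is, with identical separating structures.

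For the second statement, I would note that a positive Boolean function $f$ presented by the positive DNF $\phi = \bigvee_{j=1}^m \bigwedge_{i \in C_j} x_i$ coincides as a function with $f_{{\cal H}(\phi)}$, because the defining DNF of $f_{{\cal H}(\phi)}$, whose edges are exactly $C_1,\ldots,C_m$, is literally $\phi$; and since thresholdness is a property of the function rather than of a chosen DNF, the first statement applied to ${\cal H}(\phi)$ gives that $f$ is threshold if and only if ${\cal H}(\phi)$ is. I do not expect a genuine obstacle here, as the whole argument is definitional; the only point requiring care is keeping the logical directions aligned --- namely that the false points of $f_{\cal H}$ correspond to the vertex subsets containing no edge --- so that the inequality ``$\le t$'' and the value $0$ are matched consistently across the two definitions.
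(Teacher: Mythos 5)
Your proof is correct, and it takes exactly the route the paper intends: the paper offers no written proof at all, merely stating that the proposition ``follows directly from the definitions,'' and your argument is a careful unfolding of that definitional correspondence (subsets $\leftrightarrow$ characteristic vectors, false points of $f_{\cal H}$ $\leftrightarrow$ edge-free subsets, identical separating structures $(w,t)$ on both sides). Your added remarks on multisets, non-minimal edges, and the independence of thresholdness from the chosen DNF are sound and only make explicit what the paper leaves implicit.
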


Moreover, reformulating the characterization of threshold positive Boolean functions given by Theorem~\ref{thm:BF:characterization} in the language of hypergraphs, we obtain the following characterization of threshold hypergraphs.

\begin{theorem}[Chow~\cite{Chow}, Elgot~\cite{Elgot}]\label{thm:characterization}
A hypergraph $\cal H = (V,E)$ is {\em not} threshold if and only if there exists an integer $n$ with $n\ge 2$
and $n$ (not necessarily distinct) subsets $A_1,\ldots, A_n$ of $V$, each containing an edge of $\cal H$,
and $n$ (not necessarily distinct) subsets $B_1,\ldots, B_n$ of $V$, each containing no edge of $\cal H$, such that
for every vertex $v\in V$,
\begin{equation}\label{degree}
|\{i\mid v\in A_i\}| = |\{i\mid v\in B_i\}|\,.
\end{equation}
\end{theorem}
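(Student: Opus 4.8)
The plan is to obtain this statement as a direct reformulation of the characterization of threshold Boolean functions (Theorem~\ref{thm:BF:characterization}), using the correspondence between hypergraphs and positive Boolean functions recorded in Proposition~\ref{prop:relation}. First I would note that, by Proposition~\ref{prop:relation}, $\mathcal{H}$ is threshold if and only if the associated positive Boolean function $f_{\mathcal{H}}$ is threshold; hence $\mathcal{H}$ is \emph{not} threshold precisely when $f_{\mathcal{H}}$ is not threshold. By Theorem~\ref{thm:BF:characterization}, $f_{\mathcal{H}}$ is threshold if and only if it is asummable, so $\mathcal{H}$ is not threshold if and only if $f_{\mathcal{H}}$ is summable, that is, $k$-summable for some $k \geq 2$. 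Unwinding the definition, this means exactly that there exist an integer $n \geq 2$, false points $x^1, \ldots, x^n$ of $f_{\mathcal{H}}$, and true points $y^1, \ldots, y^n$ of $f_{\mathcal{H}}$ with $\sum_{i=1}^n x^i = \sum_{i=1}^n y^i$.

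The remaining step is to translate this vector identity into the combinatorial condition in the statement. I identify each vector in $\{0,1\}^V$ with the subset of $V$ of which it is the characteristic vector. Under this identification, the defining expression $f_{\mathcal{H}}(x) = \bigvee_{e \in \mathcal{E}} \bigwedge_{u \in e} x_u$ shows that a true point of $f_{\mathcal{H}}$ is exactly a subset of $V$ containing some edge of $\mathcal{H}$, while a false point is exactly a subset containing no edge. Taking $A_i$ to be the subset corresponding to the true point $y^i$ (so each $A_i$ contains an edge) and $B_i$ to be the subset corresponding to the false point $x^i$ (so each $B_i$ contains no edge), the equality $\sum_{i=1}^n x^i = \sum_{i=1}^n y^i$, read off in the coordinate indexed by a vertex $v \in V$, becomes $|\{i \mid v \in A_i\}| = |\{i \mid v \in B_i\}|$, which is precisely~\eqref{degree}. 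The converse is identical: any families $A_1, \ldots, A_n$ and $B_1, \ldots, B_n$ as in the statement yield, via the same dictionary, true points $y^i$ and false points $x^i$ with equal vector sum, witnessing that $f_{\mathcal{H}}$ is summable and hence not threshold.

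Since both directions rest on this single equivalence, no genuine difficulty arises beyond the bookkeeping of the dictionary between $\{0,1\}^V$-vectors and subsets of $V$, and between true/false points and subsets that do or do not contain an edge. The only points meriting a little care are that the false and true points in the definition of summability need not be distinct, which matches the ``not necessarily distinct'' proviso on the $A_i$ and $B_i$, and that allowing $\mathcal{H}$ to have multiple edges does not change which subsets of $V$ contain an edge. Thus the main ``obstacle'' here is purely notational: the theorem is essentially Theorem~\ref{thm:BF:characterization} restated in the language of hypergraphs.
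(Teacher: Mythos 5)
Your proposal is correct and matches the paper's approach exactly: the paper presents this theorem as a direct reformulation of Theorem~\ref{thm:BF:characterization} via the hypergraph--Boolean-function correspondence of Proposition~\ref{prop:relation}, which is precisely your argument. The dictionary you spell out (true points of $f_{\mathcal{H}}$ correspond to the sets $A_i$ containing an edge, false points to the sets $B_i$ containing no edge, and the equal-vector-sum condition to~\eqref{degree}) is the intended, and correct, translation.
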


\section{Basic Properties}\label{sec:properties}

In this section, we establish some basic properties of TD graphs.

\begin{proposition}\label{prop:isolated-vertex}
Every graph with an isolated vertex is TD.
\end{proposition}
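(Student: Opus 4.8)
The plan is to observe that a graph possessing an isolated vertex has \emph{no} total dominating sets at all, and then to exhibit a trivial total domishold structure that correctly witnesses this empty family. So the proof reduces to two easy points: that the family of TD sets is empty, and that emptiness can be certified by a valid pair $(w,t)$.

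First I would fix an isolated vertex $v \in V$, so that $N_G(v) = \emptyset$. For any subset $S \subseteq V$, the vertex $v$ has no neighbor whatsoever, hence in particular no neighbor in $S$. By the definition of a total dominating set (which requires \emph{every} vertex, including $v$, to have a neighbor in the set), no $S \subseteq V$ can be a total dominating set of $G$. Thus the collection of total dominating sets of $G$ is empty.

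It then remains to produce a pair $(w,t)$ for which the condition $w(S) \ge t$ holds for no $S \subseteq V$, so that the equivalence in the definition holds vacuously on both sides. The cleanest choice is the constant zero weight function $w \equiv 0$ together with threshold $t = 1$: then $w(S) = 0 < 1 = t$ for every $S \subseteq V$, so $w(S) \ge t$ never holds, matching the absence of total dominating sets. Hence $(w,t)$ is a total domishold structure of $G$, and $G$ is TD.

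I expect no real obstacle here; the statement is essentially a consistency check, reconciling the definition of total domishold graphs with the remark made immediately after the definition that such graphs need not admit any total dominating set. The only point deserving a moment's care is that the definition genuinely permits this certificate, which it does: $w$ is allowed to map into $\mathbb{R}_+$ (the non-negative reals, so the zero function is admissible) and $t$ is allowed to be any element of $\mathbb{R}_+$, so a strictly positive threshold such as $t = 1$ is legitimate.
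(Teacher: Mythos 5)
Your proof is correct and follows essentially the same approach as the paper: both observe that an isolated vertex makes the family of total dominating sets empty, and then exhibit a pair $(w,t)$ whose threshold is never met (the paper uses $w\equiv 1$ with $t=|V(G)|+1$, while you use $w\equiv 0$ with $t=1$; both are valid certificates). Your additional check that the zero weight function is admissible under $w:V\to\mathbb{R}_+$ is a nice touch but the choice of certificate is an inessential difference.
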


\begin{proof}
If $G$ has an isolated vertex, then $G$ does not have any TD sets, and hence
the pair $(w,|V(G)|+1)$, where $w(x) = 1$ for all $x\in V(G)$ is a total domishold structure of~$G$.
\end{proof}

As shown by the $4$-cycle, TD graphs are not closed under join. However, they are closed under join with $K_1$, that is, under adding a universal vertex. This is stated formally in
Proposition~\ref{prop:dominating-vertex} below and proved using the following auxiliary lemma.

\begin{lemma}\label{lem:positive-weights}
Every TD graph admits a
total domishold structure
in which all weights are positive.
\end{lemma}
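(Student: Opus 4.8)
The plan is to start from an arbitrary total domishold structure $(w,t)$ of a TD graph $G$ and perturb it slightly so that every weight becomes strictly positive, while preserving the defining property that $w(S)\ge t$ if and only if $S$ is a total dominating set. The obvious first idea is to add a tiny uniform amount $\varepsilon>0$ to every weight, but this changes the sums $w(S)$ by an amount depending on $|S|$, so it does not automatically preserve the threshold condition. Hence the real work is to choose the perturbation carefully.

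First I would isolate the finite combinatorial data that the structure must respect. Since $V$ is finite, there are only finitely many subsets $S\subseteq V$, and each is either a TD set or not. Let me set
\[
\gamma_+ \;=\; \min\{\,w(S)-t \mid S \text{ is a TD set}\,\},
\qquad
\gamma_- \;=\; \min\{\,t-w(S) \mid S \text{ is not a TD set}\,\}.
\]
By the defining property of $(w,t)$ we have $\gamma_+\ge 0$ and $\gamma_-\ge 0$; if either family is empty (for instance $G$ has no TD set at all), the corresponding quantity is taken to be $+\infty$ and causes no trouble. The key point is that these are minima over finite sets, so as long as the perturbation moves every sum $w(S)$ by strictly less than the relevant gap, no subset can cross the threshold in the wrong direction.

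Next I would perturb only the zero weights. Let $Z=\{x\in V : w(x)=0\}$. For a small parameter $\varepsilon>0$ define a new weight function $w'$ by $w'(x)=w(x)$ if $w(x)>0$ and $w'(x)=\varepsilon$ if $x\in Z$, and keep the same threshold $t$. Then for every $S\subseteq V$ we have $0\le w'(S)-w(S)=\varepsilon\,|S\cap Z|\le \varepsilon\,|V|$. Choosing $\varepsilon$ strictly smaller than $\gamma_+/|V|$ (when $\gamma_+<\infty$) and smaller than $\gamma_-/|V|$ ensures two things simultaneously: for a TD set $S$ we still have $w'(S)\ge w(S)\ge t$, and for a non-TD set $S$ we get $w'(S)\le w(S)+\varepsilon|V| < t + \gamma_- \le$ well, more precisely $w'(S)=w(S)+\varepsilon|S\cap Z| \le w(S)+\varepsilon|V| < (t-\gamma_-)+\gamma_- = t$ is too crude; one takes $\varepsilon|V|<\gamma_-$ so that $w'(S)<w(S)+\gamma_-\le t$. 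Thus $(w',t)$ is still a total domishold structure, and now every weight is strictly positive.

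The only genuinely delicate point is handling degenerate cases cleanly, which is why I spelled out the convention $\gamma_\pm=+\infty$ when one of the two families of subsets is empty; the constant uniform perturbation would not work, but the one-sided perturbation of the zero weights does, since it never decreases any sum and increases each by a controlled amount. I expect the main obstacle to be purely bookkeeping: verifying that the chosen $\varepsilon$ respects both gaps simultaneously and that the edge cases (no TD sets, all weights already positive, or $t=0$) are covered by the same formula. Since everything reduces to a finite minimization, no topological or compactness argument is needed, and the estimate is entirely elementary.
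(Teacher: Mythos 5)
Your proof is correct and takes essentially the same route as the paper's own argument: both exploit the finite, strictly positive gap between $t$ and the largest weight of a non-total-dominating set (your $\gamma_-$ is exactly the paper's $\delta = t-\max\{w(S)\mid S \text{ not a TD set}\}$) and perturb the weights upward by a total amount smaller than this gap --- the paper rescales all weights by $|V|$ and adds $\delta/2$ to each with new threshold $|V|t$, while you add $\varepsilon$ only to the zero weights and keep $t$. The one point you should make explicit is that $\gamma_- > 0$ strictly (immediate from the ``only if'' direction of the definition, since every non-TD set satisfies $w(S)<t$ and there are finitely many subsets), as otherwise no positive $\varepsilon < \gamma_-/|V|$ would exist; note also that your $\gamma_+$ constraint is superfluous, since the perturbation never decreases any sum, so TD sets stay above the threshold automatically.
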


\begin{sloppypar}
\begin{proof}
Let $(w,t)$ be a total domishold structure of a TD graph $G=(V,E)$.
The value of
$$\delta = t-\max\{w(S)\mid S\in {\cal P}(V)\setminus {\cal T}\}\,,$$
where
${\cal P}(V)$ denotes the power set of $V$ and
${\cal T}$ denotes the set of all total dominating sets of $G$,
is well defined and positive.
Let $w':V\to\mathbb{R}_+\setminus \{0\}$ and $t'\in \mathbb{R}$ be defined as:
$w'(x) = |V|w(x)+\delta/2$ for all $x\in V$, and $t' = |V|t$.
We claim that $(w',t')$ is a total domishold structure of $G$.
On the one hand, if $S\in {\cal T}$, then
$w'(S) = |V|w(S)+\delta|S|/2\ge |V|t = t'$.
On the other hand, if
$S\in {\cal P}(V)\setminus {\cal T}$, then
$w(S)+\delta/2<t$ and consequently
\hbox{$w'(S) = |V|w(S)+\delta|S|/2 \le |V|(w(S)+\delta/2)<|V|t = t'$.}\end{proof}
\end{sloppypar}

\begin{proposition}\label{prop:dominating-vertex}
Let $G$ be a graph, and let $G'$ be the graph obtained from $G$ by adding to it a vertex adjacent to all vertices of $G$.
Then, $G$ is TD if and only if $G'$ is TD.
\end{proposition}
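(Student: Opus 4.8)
The plan is to reduce everything to a clean description of the total dominating sets of $G'$ and then build (or restrict) the weights accordingly. Write $V = V(G)$, let $u$ be the added vertex, and set $V' = V \cup \{u\}$, so that in $G'$ the neighborhood of $u$ is exactly $V$ and every $v \in V$ has $N_{G'}(v) = N_G(v) \cup \{u\}$. We may assume $V \neq \emptyset$: otherwise $G$ is the empty graph (trivially TD) and $G' = K_1$, which is TD by Proposition~\ref{prop:isolated-vertex}. First I would record the key structural fact. For $D \subseteq V$ (that is, $u \notin D$), each $v \in V$ is dominated in $G'$ exactly when it is dominated in $G$, and $u$ is dominated iff $D \neq \emptyset$; since a total dominating set of a nonempty graph is itself nonempty, this gives that \emph{$D \subseteq V$ is a total dominating set of $G'$ if and only if it is a total dominating set of $G$}. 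For $D$ with $u \in D$, every vertex of $V$ is dominated by $u$ while $u$ is dominated iff $D \cap V \neq \emptyset$; hence \emph{such a $D$ is a total dominating set of $G'$ if and only if $|D| \geq 2$}.

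With this in hand, the implication ``$G'$ TD $\Rightarrow$ $G$ TD'' is immediate: given a total domishold structure $(w', t')$ of $G'$, I would restrict $w'$ to $V$ and keep the threshold $t'$. For $D \subseteq V$ the weight $w'(D)$ is unchanged, and by the first structural fact $D$ is a total dominating set of $G'$ iff it is one of $G$; hence the restricted pair is a total domishold structure of $G$.

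For the converse I would start from a total domishold structure $(w,t)$ of $G$ which, by Lemma~\ref{lem:positive-weights}, may be assumed to have all weights positive, and set $m = \min_{x \in V} w(x) > 0$. Since $V \neq \emptyset$, the empty set is not a total dominating set of $G$, so $t > 0$. Keeping $w'(x) = w(x)$ for $x \in V$ and $t' = t$, I would give the new vertex the weight $w'(u) = \max\{0,\, t - m\}$. The design requirement is dictated by the structure above: the sets $\{u\} \cup S$ with $S \neq \emptyset$ are now total dominating and must reach $t'$, while $\{u\}$ alone is not total dominating and must stay strictly below $t'$. One checks that $w'(u) < t$ (using $m > 0$ and $t > 0$) and $w'(u) + m \geq t$, which is exactly what the two requirements demand.

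It then remains to verify that $w'(D) \geq t'$ holds if and only if $D$ is a total dominating set of $G'$, by going through the three cases of the structural description: for $D \subseteq V$ the equivalence is inherited verbatim from $(w,t)$; for $D = \{u\}$ both sides fail, since $w'(D) = w'(u) < t'$; and for $D = \{u\} \cup S$ with $S \neq \emptyset$ both sides hold, since $w'(D) \geq w'(u) + m \geq t'$. The main obstacle is precisely this last balancing act in the forward direction: adding $u$ creates many new total dominating sets of $G'$ (all $\{u\} \cup S$ with $S \neq \emptyset$) that need not be total dominating in $G$, so a single weight $w'(u)$ must certify all of them yet reject $\{u\}$. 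This is possible only because $m > 0$, which is the reason Lemma~\ref{lem:positive-weights} is needed as the enabling ingredient.
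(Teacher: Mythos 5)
Your proof is correct and follows essentially the same route as the paper: the same decomposition of the total dominating sets of $G'$ into those avoiding $u$ (which coincide with those of $G$) and those containing $u$ together with a vertex of $V$, the same use of Lemma~\ref{lem:positive-weights} to get positive weights, the same new-vertex weight $t-m$ (your $\max\{0,t-m\}$ safeguard is never needed, since singletons are never total dominating sets, so $m<t$), and the same restriction argument for the converse. The only genuine addition is your explicit handling of the $V(G)=\emptyset$ case, which the paper glosses over.
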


\begin{proof}
The proof will follow from the observation that the sets ${\cal T}$ and ${\cal T'}$ of
total dominating sets of $G$ and $G'$, respectively, are related as follows:
$${\cal T'} = {\cal T}\cup \{\{v\}\cup S\mid \emptyset \neq S\subseteq V(G)\}\,,$$
where $v$ is the added vertex.

Suppose first that $G$ is TD. By Lemma~\ref{lem:positive-weights}, $G$ admits a total domishold structure $(w,t)$
with $w(x)>0$ for all $x\in V(G)$.
Let
$w':V(G')\to \mathbb{R}_+$ be defined as follows:
\begin{itemize}
  \item for all $x\in V(G)$, let $w'(x) = w(x)$;
  \item let $w'(v) = t - \min\{w(x)\mid x\in V(G)\}$.
\end{itemize}
We claim that $(w',t)$ is a total domishold structure of $G'$.
Indeed,
if $S\in {\cal T}'$ then we consider two cases.
If $v\not\in S$, then $S\in {\cal T}$ and $w'(S) = w(S)\ge t$.
If $v\in S$, then $\{x,v\}\subseteq S$ for some $x\in V(G)$, and hence
$w'(S) \ge w'(x)+w'(v) = w(x) + t - \min\{w(z)\mid z\in V(G)\}\ge t$.
Similarly, if $w'(S)\ge t$, we consider two cases.
If $v\not\in S$, then $w(S)\ge t$ and therefore $S\in {\cal T}\subseteq {\cal T'}$.
If $v\in S$, then $S\cap V(G)\neq \emptyset$ (since otherwise we would have $w'(S) = w'(v) < t$ by the positivity of $w$),
and thus $S\in {\cal T}'$.

The other direction is straightforward. Since ${\cal T'}\cap {\cal P}(V(G)) = {\cal T}$, any pair $(w,t)$ such that
$(w',t)$ is a total domishold structure of $G'$ and $w$ is the restriction of $w'$ to $V(G)$,
is a total domishold structure of $G$.
\end{proof}

\begin{corollary}\label{cor:threshold}
Every threshold graph is HTD.
\end{corollary}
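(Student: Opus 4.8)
The statement to prove is Corollary~\ref{cor:threshold}: every threshold graph is HTD. Since HTD means that every induced subgraph is TD, and threshold graphs form a hereditary class (by Theorem~\ref{thm:ChvatalHammer}, being characterized by forbidden induced subgraphs), it suffices to prove that every threshold graph is itself TD. The plan is therefore to show that the class of TD graphs contains all threshold graphs, and then invoke hereditariness of the threshold class to upgrade this to HTD.

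The approach I would take is to use the structural characterization of threshold graphs from condition~\eqref{itemCHd} of Theorem~\ref{thm:ChvatalHammer}, combined with a build-up argument based on the operations already shown to preserve the TD property. Recall that threshold graphs can be constructed starting from a single vertex by repeatedly adding either an isolated vertex or a universal vertex. Proposition~\ref{prop:dominating-vertex} tells us that adding a universal vertex preserves the TD property (in both directions). For the isolated-vertex step, Proposition~\ref{prop:isolated-vertex} shows that any graph with an isolated vertex is TD outright. The first step would be to handle the degenerate cases: if at any point in the construction the final graph has an isolated vertex, it is immediately TD by Proposition~\ref{prop:isolated-vertex}, so we may assume no isolated vertices remain.

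The cleaner route, which I would favor, is induction on the number of vertices. The base case (a single vertex, or the empty graph) is trivial since such a graph has an isolated vertex and is TD by Proposition~\ref{prop:isolated-vertex}. For the inductive step, take a threshold graph $G$ on $n\ge 2$ vertices. By the construction order for threshold graphs, the last-added vertex is either isolated or universal. If it is isolated, $G$ is TD by Proposition~\ref{prop:isolated-vertex}. If it is universal, then $G$ is obtained from a smaller threshold graph $G-v$ by adding a universal vertex; by the induction hypothesis $G-v$ is TD, and then Proposition~\ref{prop:dominating-vertex} gives that $G$ is TD. Thus every threshold graph is TD.

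The main obstacle, though modest, is bridging from ``TD'' to ``HTD.'' Here I would explicitly note that the class of threshold graphs is hereditary: by Theorem~\ref{thm:ChvatalHammer}\eqref{itemCHb}, it is characterized by the forbidden induced subgraphs $2K_2$, $C_4$, and $P_4$, so every induced subgraph of a threshold graph is again threshold. Combining this with the fact just established that every threshold graph is TD, every induced subgraph of a threshold graph $G$ is threshold and hence TD, which is precisely the definition of $G$ being HTD. I expect the only delicate point to be making sure the inductive construction of threshold graphs via isolated/universal vertex additions is correctly invoked; once that is in place, the two propositions do all the work and the hereditary upgrade is immediate.
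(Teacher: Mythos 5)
Your proof is correct and follows essentially the same route as the paper: induction on the number of vertices using Proposition~\ref{prop:isolated-vertex} for the isolated-vertex case and Proposition~\ref{prop:dominating-vertex} for the universal-vertex case, followed by the hereditariness of threshold graphs to upgrade from TD to HTD. The only cosmetic difference is that you invoke the build-up (construction-sequence) characterization of threshold graphs, while the paper derives the needed fact---that every threshold graph has an isolated or universal vertex---directly from Theorem~\ref{thm:ChvatalHammer}(c); these are equivalent formulations of the same structural property.
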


\begin{proof}
The characterization of threshold graphs due given by Theorem~\ref{thm:ChvatalHammer}(c)
implies that every threshold graph contains either an isolated vertex or a universal vertex.
Therefore, an induction on the number of vertices together with
Propositions~\ref{prop:isolated-vertex} and~\ref{prop:dominating-vertex}
shows that every threshold graph is TD.
Since the class of threshold graphs is hereditary~\cite{CH77},
every threshold graph is HTD.
\end{proof}

In general, TD graphs are not closed under disjoint union: the path $P_3$ is TD, but the graph $2P_3$ is not (cf.~Theorem~\ref{prop:forbidden-induced-subgraphs} in Section~\ref{sec:HTD-partial-characterizations}).
However, they are closed under adding a (TD) graph with a {\it unique} (inclusion-wise) minimal TD set.

\begin{proposition}\label{prop:unique-TD_disjoint-union}
Let $G$ and $H$ be graphs such that $H$ has a unique minimal TD set.
Then, $G+H$ is TD if and only if $G$ is TD.
\end{proposition}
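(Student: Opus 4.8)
The plan is to reduce everything to two structural facts about total dominating sets of a disjoint union, and then give explicit weight constructions in each direction. First I would record the two basic observations. Since $G+H$ has no edges between $V(G)$ and $V(H)$, a set $D$ is a TD set of $G+H$ if and only if $D\cap V(G)$ is a TD set of $G$ and $D\cap V(H)$ is a TD set of $H$. Moreover, TD sets are upward closed: if $D\subseteq D'$ and every vertex has a neighbor in $D$, then every vertex has a neighbor in $D'$. Combining upward closure with the hypothesis, let $M$ denote the unique minimal TD set of $H$; then every TD set of $H$ contains some minimal TD set, necessarily $M$, and conversely every superset of $M$ is a TD set. Hence the TD sets of $H$ are \emph{exactly} the sets $D_H$ with $M\subseteq D_H\subseteq V(H)$.

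For the direction ``$G+H$ is TD $\Rightarrow$ $G$ is TD'', I would start from a total domishold structure $(w,t)$ of $G+H$ and simply restrict and shift. The key point is that for any $S\subseteq V(G)$, the set $S\cup M$ meets $V(H)$ in $M$ (a TD set of $H$) and meets $V(G)$ in $S$, so $S\cup M$ is a TD set of $G+H$ if and only if $S$ is a TD set of $G$. Therefore $S$ is a TD set of $G$ iff $w(S\cup M)=w(S)+w(M)\ge t$, i.e.\ iff $w(S)\ge t-w(M)$. Setting $w_G=w|_{V(G)}$ and $t_G=t-w(M)$ gives the required structure; nonnegativity of $t_G$ follows because (for nonempty $G$) the empty set is not a TD set, forcing $w_G(\emptyset)=0<t_G$.

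For the direction ``$G$ is TD $\Rightarrow$ $G+H$ is TD'', I would take a total domishold structure $(w_G,t_G)$ of $G$ and build a structure of $G+H$ whose threshold encodes the \emph{conjunction} of the two conditions ``$S\cap V(G)$ is a TD set of $G$'' and ``$M\subseteq S\cap V(H)$''. Let $W=w_G(V(G))$, fix $c>W$ (so $c>W-t_G$ since $t_G\ge 0$), and define $w$ by $w|_{V(G)}=w_G$, $w(y)=c$ for $y\in M$, and $w(y)=0$ for $y\in V(H)\setminus M$, with threshold $t=t_G+c\,|M|$. Writing $S=S_G\cup S_H$, one has $w(S)=w_G(S_G)+c\,|S_H\cap M|$: if $M\subseteq S_H$ then $w(S)\ge t$ reduces to $w_G(S_G)\ge t_G$, matching that $S$ is a TD set; and if $M\not\subseteq S_H$ then $|S_H\cap M|\le |M|-1$, so $w(S)\le W+c(|M|-1)<t_G+c\,|M|=t$, matching that $S$ is not a TD set.

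The step I expect to be the main obstacle is this last construction: a single linear threshold must simultaneously enforce a TD condition on $G$ and the containment $M\subseteq S_H$, and the crucial idea is to make the weight $c$ on the vertices of $M$ so large that omitting even one of them makes the threshold unreachable regardless of the weight accumulated in $G$. The two structural observations of the first paragraph do the rest, and the degenerate cases (an empty $G$, or $G$ with an isolated vertex, where Proposition~\ref{prop:isolated-vertex} applies) are handled directly since then both sides of the equivalence hold trivially.
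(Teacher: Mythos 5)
Your proof is correct and follows essentially the same route as the paper's: the same description of the TD sets of $G+H$ as sets $S\cup T'$ with $S$ a TD set of $G$ and $M\subseteq T'\subseteq V(H)$, the same restrict-and-shift argument for one direction, and the same construction (large weight on $M$, zero on $V(H)\setminus M$, threshold $t_G+c\,|M|$) for the other, where the paper takes $c=w_G(V(G))$ and you take $c>w_G(V(G))$. Your version is marginally more careful about degenerate cases (empty $G$, $t_G=0$), which the paper glosses over.
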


\begin{proof}
Let $T$ be the unique minimal TD set in $H$. Then,
the sets ${\cal T}$ and ${\cal T'}$ of
total dominating sets of $G$ and $G':= G+H$ are related as follows:
$${\cal T'} = \{S\cup T'\mid S\in {\cal T} \textrm{ and } T\subseteq T'\subseteq V(H)\}\,.$$

Suppose that $G$ is TD, with a total domishold structure $(w,t)$.
Let $N = w(V(G))$ and define $w':V(G') \to \mathbb{R}_+$ as
$$w'(x) = \left\{
            \begin{array}{ll}
              w(x), & \hbox{if $x\in V(G)$;} \\
              N, & \hbox{if $x\in T$;}\\
              0, & \hbox{otherwise.}
            \end{array}
          \right.
$$
It is not hard to verify that the pair $(w',t+|T|N)$ a total domishold structure of $G'$.

Conversely, if $(w',t')$ is a total domishold structure of $G'$, then
$(w,t'-w'(T))$, where $w$ is the restriction of $w'$ to $V(G)$, is a total domishold structure of $G$.
\end{proof}

\begin{corollary}\label{cor:isolated-edge}
Let $G$ be a graph, and let $G' = G+K_2$.
Then, $G$ is TD if and only if $G'$ is TD.
\end{corollary}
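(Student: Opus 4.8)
The plan is to derive Corollary~\ref{cor:isolated-edge} directly from Proposition~\ref{prop:unique-TD_disjoint-union} by verifying that $K_2$ satisfies the hypothesis of that proposition, namely that it has a unique minimal total dominating set. First I would identify the total dominating sets of $K_2$. Writing $V(K_2) = \{a,b\}$ with $ab \in E(K_2)$, I observe that $a$ has $b$ as its only neighbor and $b$ has $a$ as its only neighbor, so a set $D$ totally dominates $K_2$ precisely when every vertex has a neighbor in $D$: vertex $a$ forces $b \in D$ and vertex $b$ forces $a \in D$. Hence the only total dominating set of $K_2$ is $\{a,b\} = V(K_2)$ itself.

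Since there is exactly one total dominating set, it is trivially the unique (inclusion-wise) minimal one. Therefore $H = K_2$ meets the hypothesis of Proposition~\ref{prop:unique-TD_disjoint-union}. Applying that proposition with this choice of $H$ gives immediately that $G + K_2$ is TD if and only if $G$ is TD, which is exactly the statement of the corollary.

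The argument is essentially a one-line specialization, so there is no genuine obstacle; the only thing requiring care is the verification that $K_2$ has a \emph{unique} minimal TD set, since for many graphs the minimal total dominating set is far from unique. The key point, which makes $K_2$ (and more generally any graph whose unique TD set is its whole vertex set) fit the framework, is that the total domination constraint on a single edge is maximally restrictive: each endpoint's closed-neighborhood condition forces the other endpoint into $D$. Once this is noted, the corollary follows with no further computation.

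\begin{proof}
Let $V(K_2) = \{a,b\}$ with $ab \in E(K_2)$. Since $N(a) = \{b\}$ and $N(b) = \{a\}$, a subset $D \subseteq V(K_2)$ is a total dominating set of $K_2$ if and only if $b \in D$ (so that $a$ has a neighbor in $D$) and $a \in D$ (so that $b$ has a neighbor in $D$); that is, if and only if $D = \{a,b\}$. Thus $K_2$ has a unique total dominating set, which is in particular its unique minimal total dominating set. Applying Proposition~\ref{prop:unique-TD_disjoint-union} with $H = K_2$ yields that $G' = G + K_2$ is TD if and only if $G$ is TD.
\end{proof}
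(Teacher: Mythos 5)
Your proof is correct and follows exactly the route the paper intends: the corollary is stated immediately after Proposition~\ref{prop:unique-TD_disjoint-union} with no separate proof, precisely because $K_2$'s unique (minimal) total dominating set is $V(K_2)$, which is the one-line verification you supply. Nothing is missing.
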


A graph $G$ is said to be {\it co-domishold} if its complement is domishold.
Since threshold graphs are exactly the domishold co-domishold graphs~\cite{BH78, CH77},
the following result generalizes Corollary~\ref{cor:threshold}.

\begin{corollary}\label{cor:co-domishold}
Every co-domishold graph is HTD.
\end{corollary}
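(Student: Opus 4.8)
Here $G$ is the co-domishold graph, so $H:=\overline{G}$ is domishold, and I want to conclude that $G$ is HTD. The plan is first to reduce the hereditary statement to a non-hereditary one: the class of domishold graphs is hereditary (as noted in the introduction), hence so is its image under complementation, so every induced subgraph of a co-domishold graph is again co-domishold. Consequently, once I show that \emph{every} co-domishold graph is TD, it follows immediately that every induced subgraph of $G$ is TD, i.e.\ that $G$ is HTD. Thus the entire task is the implication: if $\overline{G}$ is domishold, then $G$ is TD. I would prove this by induction on $|V(G)|$, mirroring the proof of Corollary~\ref{cor:threshold} but replacing the Chv\'atal--Hammer decomposition of threshold graphs by the corresponding structural decomposition of domishold graphs, read off through the complement.

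The easy reductions come first. A universal vertex of $H=\overline{G}$ corresponds to an isolated vertex of $G$, so in that case $G$ is TD directly by Proposition~\ref{prop:isolated-vertex}. An isolated vertex of $H$ corresponds to a universal vertex $u$ of $G$; then $\overline{G-u}=H-u$ is an induced subgraph of the domishold graph $H$, hence domishold, so $G-u$ is co-domishold and TD by the induction hypothesis, and therefore $G$ is TD by Proposition~\ref{prop:dominating-vertex}. These two cases are exactly the analogue of the threshold argument in Corollary~\ref{cor:threshold}.

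The substance lies in the remaining case, where $H$ has neither a universal nor an isolated vertex. Here I would invoke the structure theory of domishold graphs (Benzaken--Hammer~\cite{BH78}) to exhibit a nontrivial join decomposition $H=H_1\ast H_2$; taking complements turns this into a disjoint union $G=G_1+G_2$ with $G_i=\overline{H_i}$, and I want to feed this into Proposition~\ref{prop:unique-TD_disjoint-union}. Concretely, I would argue that one of the factors, say $G_2$, has a unique (inclusion-wise) minimal total dominating set, so that Proposition~\ref{prop:unique-TD_disjoint-union} reduces the TD-ness of $G$ to that of the smaller co-domishold graph $G_1$, which is TD by induction (the special case $G_2=K_2$ being exactly Corollary~\ref{cor:isolated-edge}). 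Establishing this decomposition, and in particular verifying the ``unique minimal TD set'' condition on the appropriate factor, is the main obstacle: one must pin down precisely which domishold graphs lack both a universal and an isolated vertex and show that each of them splits as a join of this form. The plausibility of the last point is supported by the observation that a factor with several incomparable minimal total dominating sets tends to produce, after complementation, a matching-type summability obstruction that would contradict domisholdness of $\overline{G}$.

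An alternative route I would keep in reserve recasts everything through hypergraphs. One checks that $G$ is TD if and only if the hypergraph of open neighborhoods $\{N_G(v):v\in V\}$ is threshold---since a set $D$ is a TD set exactly when its complement $V\setminus D$ contains no open neighborhood---and that $\overline{G}$ is domishold if and only if the hypergraph of complementary sets $\{V\setminus N_G(v):v\in V\}$ is threshold, because $N_{\overline{G}}[v]=V\setminus N_G(v)$. The corollary would then reduce to a single lemma: complementing every edge of a hypergraph preserves the threshold property. I would try to prove that lemma via the summability criterion of Theorem~\ref{thm:characterization}, transporting a summability certificate from one hypergraph to the other by taking set complements. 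The delicate point---and the reason I treat this as a backup---is that a certificate for $\{N_G(v)\}$ uses sets containing an edge and sets containing no edge, whereas after complementation one needs sets contained in an edge and sets contained in no edge; making that conversion go through is precisely the crux of this second approach.
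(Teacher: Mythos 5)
Your overall skeleton coincides with the paper's proof: reduce the hereditary claim to the plain one via heredity of (co-)domisholdness, then induct on $|V(G)|$, handling an isolated vertex of $G$ by Proposition~\ref{prop:isolated-vertex} and a universal vertex of $G$ by Proposition~\ref{prop:dominating-vertex}. The gap is exactly where you flag it, and it is genuine: in the remaining case you must know the structure of a domishold graph with neither a universal nor an isolated vertex, and you neither state nor prove it. The paper closes this case by quoting a concrete fact from Benzaken and Hammer~\cite{BH78}: every co-domishold graph contains an isolated vertex, a universal vertex, \emph{or a connected component isomorphic to $K_2$} (equivalently, a domishold graph with no isolated and no universal vertex is the join of $2K_1$ with the rest of the graph). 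This is precisely the instantiation of your hoped-for decomposition, with $G_2=K_2$; since $K_2$ has a unique minimal TD set, Corollary~\ref{cor:isolated-edge} (your Proposition~\ref{prop:unique-TD_disjoint-union} route) finishes the induction. Your proposal replaces this citation with the task of ``pinning down precisely which domishold graphs lack both a universal and an isolated vertex,'' which is the entire remaining content of the proof; the heuristic remark about incomparable minimal TD sets producing summability obstructions is not an argument. So the main route is a correct plan whose decisive step is missing.

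Your backup route is worse off, because its key lemma is false: complementing every edge of a hypergraph does \emph{not} preserve thresholdness. Take ${\cal V}=\{1,2,3,4,5\}$ with edges $e_1=\{1,2,5\}$, $e_2=\{3,4,5\}$, $e_3=\{1,3\}$, $e_4=\{1,4\}$. This hypergraph is threshold: with $w=(20,2,11,11,4)$ and $t=24$, every edge has weight at least $26$, while the maximal sets containing no edge, namely $\{1,2\}$, $\{1,5\}$, $\{2,3,4\}$, $\{2,3,5\}$, $\{2,4,5\}$, have weight at most $24$. The edge-complemented hypergraph has edges $\{3,4\}$, $\{1,2\}$, $\{2,4,5\}$, $\{2,3,5\}$; there, $A_1=\{1,2\}$ and $A_2=\{3,4\}$ each contain an edge, $B_1=\{1,3\}$ and $B_2=\{2,4\}$ contain none, and every vertex lies in as many $A_i$'s as $B_i$'s, so Theorem~\ref{thm:characterization} (with $n=2$) shows it is not threshold. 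Hence any proof along the lines of your second approach must exploit that the hypergraphs involved are neighborhood hypergraphs coming from a domishold complement, not merely that one of them is threshold; thresholdness alone does not transfer across edge complementation.
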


\begin{proof}
This can be proved similarly as Corollary~\ref{cor:threshold}, using
Corollary~\ref{cor:isolated-edge} in addition to Propositions~\ref{prop:isolated-vertex} and~\ref{prop:dominating-vertex},
and the facts that:
(1) the class of co-domishold graphs is hereditary (this is because the class of domishold graph is hereditary~\cite{BH78});
(2) every co-domishold graph contains either an isolated vertex, a universal vertex, or a connected component isomorphic to $K_2$~\cite{BH78}.
\end{proof}

Note that not every HTD graph is co-domishold. For example, the $4$-vertex path $P_4$
is easily verified to be HTD but it is not domishold~\cite{BH78}, and hence (since it is self-complementary) also not co-domishold.

As observed in the introduction, the set of TD graphs is not hereditary.
We conclude this section by strengthening this observation,
showing that the set of TD graphs is not contained in any nontrivial hereditary class of graphs
(even if we disallow graphs with isolated vertices).

\begin{proposition}\label{prop:non-hereditary}
For every graph $G$ there exists a TD graph $G'$ without isolated vertices such that $G$ is an induced subgraph of $G'$.
\end{proposition}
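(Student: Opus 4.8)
The plan is to give an explicit construction. The natural first attempts all fail: by Proposition~\ref{prop:isolated-vertex} one could adjoin an isolated vertex to $G$ to obtain a TD graph, but this introduces precisely the isolated vertex we are forbidden to have; and by Proposition~\ref{prop:dominating-vertex} adjoining a universal vertex preserves TD-ness but does not \emph{create} it, so it is useless when $G$ itself is not TD. The genuine difficulty is that as soon as $G'$ has no isolated vertices it automatically possesses at least one TD set, namely the whole vertex set $V(G')$ (every vertex then has a neighbor, and that neighbor lies in $V(G')$). Hence we cannot fall back on the ``empty family of TD sets'' trick, and must instead force the family of TD sets of $G'$ to be genuinely threshold-realizable.

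My construction is to augment $G$ by two new vertices: a vertex $u$ adjacent to every vertex of $G$, and a vertex $u'$ whose only neighbor is $u$; call the result $G'$. Since all new edges are incident to $u$ or $u'$, the set $V(G)$ induces exactly $G$ in $G'$, and no vertex is isolated, because $u'$ has degree $1$, $u$ has degree $|V(G)|+1$, and each original vertex gains $u$ as a neighbor. The role of the pendant $u'$ is that every TD set $D$ of $G'$ must contain $u$, since $u$ is the only possible neighbor of $u'$. Once $u\in D$, every original vertex $x$ is dominated by $u$, so the internal structure of $G$ becomes irrelevant, and the sole remaining requirement is that $u$ itself be dominated, i.e.\ that $D$ contain some vertex besides $u$. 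I therefore claim that $D$ is a TD set of $G'$ if and only if $u\in D$ and $|D|\ge 2$.

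It then remains to realize this family by weights, which is routine. Writing $n'=|V(G')|$, assign weight $n'-1$ to $u$ and weight $1$ to every other vertex, and take threshold $t=n'$. Every TD set contains $u$ together with at least one further vertex and so has weight at least $n'$; the singleton $\{u\}$ has weight $n'-1<n'$; and any set avoiding $u$ has weight at most $n'-1<n'$. This exhibits a total domishold structure of $G'$, completing the argument. The one step requiring care is the TD-set characterization of the previous paragraph, which is where the universal-vertex-plus-pendant gadget does its work; the weighting and the degenerate case $V(G)=\emptyset$, in which $G'=K_2$, are then immediate.
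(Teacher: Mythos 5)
Your proof is correct, and your construction is genuinely different from (and leaner than) the paper's. The paper adds a vertex $v$ joined to all isolated vertices of $G$ and then attaches a pendant (private) neighbor to \emph{every} vertex of the resulting graph; since each pendant forces its unique neighbor into any TD set, $V(G)\cup\{v\}$ is intended to be the unique minimal TD set, and the TD sets are then exactly its supersets, captured by $0/1$ weights with threshold $|V(G)|+1$. You instead add only two vertices, a universal vertex $u$ and a pendant $u'$ on $u$: the pendant forces $u$ into every TD set, and universality of $u$ then collapses the TD-set family to ``all sets containing $u$ of size at least two'', which a single heavy weight on $u$ realizes. Your gadget buys economy (two added vertices instead of $|V(G)|+2$) and a TD-set family that is completely independent of the structure of $G$; it also sidesteps an edge case on which the paper's argument, as literally written, stumbles: if $G$ has no isolated vertices, then $v$'s only neighbor in the paper's $G'$ is its own pendant, call it $p_v$, so the unique minimal TD set there is $V(G)\cup\{v\}\cup\{p_v\}$ rather than $V(G)\cup\{v\}$, and the stated pair $(w,t)$ fails to be a total domishold structure (the flaw is minor and easily repaired, e.g.\ by also giving $p_v$ weight $1$ and taking $t=|V(G)|+2$, or by omitting $v$ when $G$ has no isolated vertices). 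What the paper's heavier construction buys is thematic consistency: it produces a graph with a \emph{unique} minimal TD set, the same mechanism exploited in Proposition~\ref{prop:unique-TD_disjoint-union} and in the hardness reductions of Appendix~\ref{hardness}, whereas your TD-set family (``contains $u$, size at least two'') is of the same shape as the one used for complete graphs in the paper's introductory example.
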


\begin{proof}
Let $G$ be a graph. First, add to $G=(V,E)$ a new vertex, say $v$, and connect $v$ only to isolated vertices of $G$.
Second, add a new private neighbor to every vertex of the resulting graph. Denoting by $G'$ the obtained graph,
it is clear that $G$ is an induced subgraph of $G'$. By construction,
the set $V\cup \{v\}$ is the unique minimal total dominating set in $G'$. Therefore,
the pair $(w,t)$, where $w:V(G')\to\mathbb{R}_+$ is given by $w(x) = 1$
if  $x\in V\cup \{v\}$ and $w(x)  = 0$, otherwise,
and $t = |V|+1$, is a total domishold structure of $G'$.
\end{proof}

\section{Connections to Boolean Functions and Hypergraphs}\label{sec:TD-graphs-BFs-hypergraphs}

In this section, we exhibit close relationships between total domishold graphs, threshold Boolean functions and threshold hypergraphs.
We start with a characterization of TD graphs in terms of the thresholdness of a derived Boolean function.

Given a set $V$ and a binary vector $x\in \{0,1\}^V$, the {\it support set} of a vector $x\in \{0,1\}^V$ is the set $S(x) = \{v\in V\mid x_v = 1\}$. Also, by $\overline x$ we denote the vector $\overline x\in \{0,1\}^V$ given by $(\overline x)_i = 1-x_i$ for all $i\in V$.
Given a graph $G = (V,E)$, its {\it neighborhood function} is the positive Boolean function $f_G:\{0,1\}^{V}\to\{0,1\}$
that takes value $1$ precisely on vectors $x\in \{0,1\}^V$ whose support set $S(x)$
contains the neighborhood of some vertex of $G$. Formally, $$f_G(x) = \bigvee_{v\in V} \bigwedge_{u\in N(v)}x_u$$
for every vector $x\in \{0,1\}^{V}$. (If $N(v) = \emptyset$ then $\bigwedge_{u\in N(v)}x_u = 1$.)

\begin{proposition}\label{lem:reduction}
A graph $G = (V,E)$ with $V = \{v_1,\ldots, v_n\}$
is total domishold if and only if its neighborhood function $f_G$ is threshold.
Moreover, if $(w_1,\ldots, w_n,t)$ is an integral separating structure of $f_G$, then
$(w;\sum_{i = 1}^nw_i-t)$ with $w(v_i) = w_i$ for all $i\in [n]$ is a total domishold structure of $G$.
\end{proposition}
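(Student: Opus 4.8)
The plan is to establish the equivalence by translating the definition of a total dominating set into the language of the neighborhood function $f_G$, and then reading off the threshold condition on each side. The key observation is that a set $S \subseteq V$ is a total dominating set of $G$ precisely when \emph{every} vertex of $G$ has a neighbor in $S$, i.e.\ when there is \emph{no} vertex $v$ whose entire neighborhood $N(v)$ is disjoint from $S$. Dually, $S$ fails to be a TD set exactly when some vertex $v$ has $N(v) \subseteq V \setminus S$. This is the crucial bridge: the support set of $\overline{x}$, where $x$ is the indicator vector of $S$, equals $V \setminus S$, and $f_G(\overline{x}) = 1$ iff $V \setminus S$ contains $N(v)$ for some $v$, iff $S$ is \emph{not} a total dominating set. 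So $S$ is a TD set if and only if $f_G(\overline{x}) = 0$.

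First I would fix the correspondence between subsets $S \subseteq V$ and their indicator vectors $x \in \{0,1\}^V$, and record the identity $S(\overline{x}) = V \setminus S$. Then I would prove the central claim just described: $S$ is a total dominating set of $G$ iff $f_G(\overline{x}) = 0$. The forward direction uses that each $N(v)$ meets $S$, so no disjunct of the DNF for $f_G$ is satisfied by $\overline{x}$; the reverse direction is the contrapositive. Here one must be careful with the degenerate case $N(v) = \emptyset$ (an isolated vertex $v$): by the paper's convention $\bigwedge_{u \in \emptyset} x_u = 1$, so $f_G \equiv 1$ and hence no $S$ is a TD set, matching the fact that a graph with an isolated vertex has no TD sets. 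This boundary case is worth flagging explicitly to keep the equivalence honest.

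Next I would unwind the two threshold conditions in tandem. Suppose $f_G$ is threshold with integral separating structure $(w_1,\ldots,w_n,t)$, so $f_G(x) = 0$ iff $\sum_{i=1}^n w_i x_i \le t$. Setting $W := \sum_{i=1}^n w_i$ and evaluating at $\overline{x}$, we have $\sum_i w_i \overline{x}_i = W - \sum_i w_i x_i$, so $f_G(\overline{x}) = 0$ iff $W - w(S) \le t$, i.e.\ iff $w(S) \ge W - t$. Combining with the central claim, $S$ is a TD set iff $w(S) \ge W - t$, which is exactly the statement that $(w; W - t)$ is a total domishold structure of $G$; this proves the ``moreover'' part and one direction of the equivalence. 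For the converse, given a total domishold structure $(w,t')$ of $G$ one reverses the computation to produce a separating structure of $f_G$, invoking (where convenient) the positivity guaranteed by Lemma~\ref{lem:positive-weights} and the fact from Theorem~\ref{thm:BF:characterization}'s surrounding discussion that a threshold positive Boolean function admits an \emph{integral} separating structure.

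The main obstacle I anticipate is bookkeeping around thresholds rather than any deep difficulty: the inequality $w(S) \ge t$ defining a TD structure is non-strict on the ``true'' side, whereas the separating-structure convention for $f_G$ uses $f_G(x)=0 \iff \sum w_i x_i \le t$, a non-strict inequality on the ``false'' side, and one must verify that complementation sends one convention cleanly to the other without an off-by-one or strict-versus-non-strict mismatch. One should also confirm that $f_G$ is genuinely a \emph{positive} Boolean function (immediate from its positive DNF form) so that Theorems~\ref{thm:BF:characterization} and~\ref{thm:threshold-Bfs} apply, and that the complementation map $x \mapsto \overline{x}$ interacts correctly with the monotonicity direction. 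Once these conventions are aligned, the equivalence and the explicit formula for the structure follow directly.
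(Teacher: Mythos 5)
Your proposal is correct, and it reaches the conclusion by a slightly more elementary route than the paper. Both arguments hinge on the same bridge --- $S$ is a total dominating set if and only if $f_G(\overline{x}) = 0$, where $x$ is the indicator vector of $S$ --- but the paper wraps this in the dual-function formalism: it cites from~\cite{CH11} that a positive function $f$ is threshold iff $f^d(x)=\overline{f(\overline{x})}$ is, together with the explicit rule that an integral separating structure $(w,t)$ of $f$ yields the separating structure $(w,\sum_i w_i - t - 1)$ of $f^d$, and then identifies non-TD sets with false points of $f_G^d$. Because the paper's route negates an inequality (turning $\le t$ into $>t$, hence $\ge t+1$), it genuinely needs integrality and carries $\pm 1$ shifts through the whole proof. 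Your route instead substitutes $\overline{x}$ directly into the separating inequality, so the non-strict $\sum_i w_i \overline{x}_i \le t$ becomes the non-strict $w(S) \ge \sum_i w_i - t$ with no off-by-one at all; this makes the ``moreover'' formula $(w;\sum_i w_i - t)$ fall out immediately, works for arbitrary (not necessarily integral) separating structures, and is self-contained in that it needs neither the dualization facts nor Lemma~\ref{lem:positive-weights}, which you invoke only ``where convenient'' and could simply drop. One shared caveat: if $G$ has an isolated vertex then $f_G\equiv 1$, and under the paper's convention that a separating structure has a \emph{non-negative} threshold, $f_G\equiv 1$ admits no separating structure even though such a $G$ is total domishold; you at least flag this degenerate case for the central claim, while the paper's proof is silent about it, so this is a defect of the paper's definitions rather than a gap in your argument (it disappears if one allows real thresholds, as is standard).
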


\begin{sloppypar}
\begin{proof}
First, recall that a positive Boolean function $f(x_1,\ldots, x_n)$ is threshold if and only if its dual function $f^d(x) = \overline{f(\overline x)}$ is threshold, and that if $(w_1,\ldots, w_n,t)$ is an integral separating structure of $f$, then
$(w_1,\ldots, w_n,\sum_{i = 1}^nw_i-t-1)$ is a separating structure of $f^d$~\cite{CH11}.
Therefore, it suffices to argue that $G$ is total domishold if and only if $f_G^d$ is threshold.

Let $x\in \{0,1\}^{V}$ and let $S(x)$ be the support set of $x$.
By definition, $f_G^d(x) = 0$ if and only if $f(\overline x) = 1$, which is the case if and only if
$V\setminus S$ contains the neighborhood of some vertex.
In other words, $f_G^d(x) = 0$ if and only if $S$ is not a total dominating set.
Hence, if the dual function $f_G^d$ is threshold with an integral separating structure $(w_1,\ldots, w_n,t)$,
then $(w,t+1)$ with $w(v_i) = w_i$ for all $i\in [n]$ is a total domishold structure of $G$, and conversely, if $(w,t)$ is an integral
total domishold structure of $G$, then $(w_1,\ldots, v_n,t-1)$ with $w_i = w(v_i)$
for all $i\in [n]$ is a separating structure of $f_G^d$.

Finally, if
$(w_1,\ldots, w_n,t)$ is an integral separating structure of $f_G$, then
$(w_1,\ldots, w_n,\sum_{i = 1}^nw_i-t-1)$ is a separating structure of $f_G^d$ and hence
$(w;\sum_{i = 1}^nw_i-t)$ with $w(v_i) = w_i$ for all $i\in [n]$
is a total domishold structure of $G$.
\end{proof}
\end{sloppypar}

In view of Proposition~\ref{prop:relation}, TD graphs can also be characterized in terms of the thresholdness of a derived
hypergraph. We now show that the converse is also true, that is, we can test whether a given hypergraph is threshold by testing whether a derived graph is total domishhold. In fact, the derived graph can be assumed to be a split graph. Recall that a graph is {\it split} if its
vertex set can be partitioned into a clique and an independent set.

First, we derive a simple but useful property of threshold hypergraphs. A {\it universal vertex} in a hypergraph ${\cal H}$ is a vertex of ${\cal H}$ contained in all hyperedges of ${\cal H}$.

\begin{lemma}\label{prop:univesal-vertex}
Let ${\cal H = (V,E)}$ be a hypergraph, and let ${\cal H}'= ({\cal V}',{\cal E}')$ be the hypergraph obtained from ${\cal H}$ by adding to it a universal vertex, that is, ${\cal V}' = {\cal V}\cup\{v\}$ and ${\cal E}' = \{e\cup \{v\}\mid e\in {\cal E}\}$.
Then, $\cal H$ is threshold if and only if ${\cal H}'$ is.
\end{lemma}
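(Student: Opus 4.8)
The plan is to prove both implications by exhibiting explicit separating structures, built on one elementary set-theoretic observation about how edges behave under the addition of the universal vertex $v$. Since every edge of ${\cal H}'$ has the form $e\cup\{v\}$ with $e\in{\cal E}$, a set $X\subseteq{\cal V}'$ contains an edge of ${\cal H}'$ if and only if $v\in X$ and $X\setminus\{v\}$ contains an edge of ${\cal H}$; indeed, $e\cup\{v\}\subseteq X$ is equivalent to $v\in X$ together with $e\subseteq X\setminus\{v\}$. This equivalence is the bridge between the separating structures of the two hypergraphs, and the whole proof reduces to translating a separating structure across it while respecting the non-negativity constraints in the definition of a threshold hypergraph.

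For the forward direction, suppose $(w,t)$ is a separating structure of ${\cal H}$. I would keep the weights of the old vertices unchanged, assign $v$ a weight large enough that no $v$-free set can ever reach the new threshold, and shift the threshold so that among the sets containing $v$ the edge-test reduces exactly to that of ${\cal H}$. Concretely, I would take $w'(u)=w(u)$ for $u\in{\cal V}$, $w'(v)=w({\cal V})$, and $t'=w({\cal V})+t$. For $v\notin X$ one then gets $w'(X)=w(X)\le w({\cal V})\le t'$, matching the fact that such an $X$ contains no edge of ${\cal H}'$; for $v\in X$ one has $w'(X)=w(X\setminus\{v\})+w({\cal V})$, so $w'(X)>t'$ is equivalent to $w(X\setminus\{v\})>t$, that is, to $X\setminus\{v\}$ containing an edge of ${\cal H}$, which by the observation is exactly the condition that $X$ contains an edge of ${\cal H}'$. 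All new weights and the new threshold are clearly non-negative.

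For the reverse direction, given a separating structure $(w',t')$ of ${\cal H}'$, I would simply restrict and re-center: set $w=w'|_{\cal V}$ and $t=t'-w'(v)$. For $X\subseteq{\cal V}$ the identity $w'(X\cup\{v\})=w(X)+w'(v)$ together with the observation gives that $w(X)\le t$ iff $w'(X\cup\{v\})\le t'$ iff $X\cup\{v\}$ contains no edge of ${\cal H}'$ iff $X$ contains no edge of ${\cal H}$, as required. The one point that needs genuine care---and the only real obstacle---is verifying that $t=t'-w'(v)$ is non-negative, since the definition requires a threshold in $\mathbb{R}_+$. This I would obtain by testing the singleton $\{v\}$: under the standing convention that edges are non-empty, $\{v\}$ contains no edge of ${\cal H}'$, so $w'(v)=w'(\{v\})\le t'$ and hence $t\ge 0$. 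Note that this non-emptiness convention is essential: if empty edges were allowed then $\{v\}\in{\cal E}'$ could make ${\cal H}'$ threshold while ${\cal H}$ is not, and the equivalence would fail.

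Optionally I would remark that the same result follows from Proposition~\ref{prop:relation} once one notes that $f_{{\cal H}'}(x)=x_v\wedge f_{\cal H}(x)$, reducing the claim to the fact that conjoining a fresh variable preserves thresholdness; but the direct construction above is self-contained and keeps explicit control of the weights and the threshold.
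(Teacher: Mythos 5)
Your proof is correct and uses exactly the same constructions as the paper ($w'(v)=w({\cal V})$ and $t'=t+w({\cal V})$ for the forward direction; restriction of $w'$ with $t=t'-w'(v)$ for the converse); the paper merely states that these pairs are separating structures, whereas you carry out the verification. Your additional check that $t'-w'(v)\ge 0$, together with the caveat about empty edges, flags a genuine subtlety the paper overlooks: the paper's own conventions allow ${\cal E}=\{\emptyset\}$ (it appears in the proof of Proposition~\ref{prop:hypergraph-to-graph}), and in that case the lemma as stated would indeed fail under the non-negative-threshold definition, so the non-empty-edge convention (or allowing negative thresholds) really is needed.
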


\begin{proof}
Suppose first that $\cal H$ is threshold, with an integral separating structure $(w,t)$.
Then, the pair $(w',t')$, where
$w':{\cal V}'\to \mathbb{R}_+$ is defined as $$w'(x) = \left\{
                                                            \begin{array}{ll}
                                                              w({\cal V}), & \hbox{if $x = v$;} \\
                                                              w(x), & \hbox{otherwise.}
                                                            \end{array}
                                                          \right.
$$
and $t' = t+w(V)$, is a separating structure of ${\cal H}'$.
Conversely, if ${\cal H}'$ is threshold, with an integral separating structure $(w',t')$, then the pair $(w,t)$, where
$w:{\cal V}\to \mathbb{R}_+$ is the restriction of $w'$ to ${\cal V}$ and $t = t'-w'(v)$, is a separating structure of ${\cal H}'$.
\end{proof}

Given a hypergraph ${\cal H = (V,E)}$, the {\it split-incidence graph} of $\cal H$ (see, e.g.,~\cite{KLMN})
is the split graph $G = {\it SI}({\cal H})$ with $V(G) = {\cal V}\cup {\cal E}'$, where
${\cal E}' = \{e'\mid e\in {\cal E}\}$,
$\cal V$ is a clique, ${\cal E}'$ is an independent set, and
$v\in \cal V$ is adjacent to $e'\in {\cal E}'$ if and only if $v\in e$.

\begin{proposition}\label{prop:hypergraph-to-graph}
For every hypergraph $\cal H$, it holds that
$\cal H$ is threshold if and only if its split-incidence graph ${\it SI}({\cal H})$ is total domishold.
\end{proposition}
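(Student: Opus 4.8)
The plan is to convert total dominating sets of the split-incidence graph $G={\it SI}(\mathcal{H})$ into transversals of $\mathcal{H}$ — sets $A\subseteq\mathcal{V}$ meeting every edge — and to use the elementary identity that $A$ is a transversal if and only if $\mathcal{V}\setminus A$ contains no edge of $\mathcal{H}$. First I would record the structure of the TD sets of $G$. Write a candidate as $D=A\cup B$ with $A=D\cap\mathcal{V}$ and $B=D\cap\mathcal{E}'$. Since the neighbourhood of $e'\in\mathcal{E}'$ is exactly $e$, every vertex of $\mathcal{E}'$ is dominated precisely when $A$ is a transversal; and since $\mathcal{V}$ is a clique, every $v\in\mathcal{V}$ is dominated automatically once $|A|\ge 2$. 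Consequently, whenever $|A|\ge 2$, the set $D$ is a TD set of $G$ if and only if $A$ is a transversal of $\mathcal{H}$, regardless of $B$.

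Assume for the moment that $\mathcal{H}$ has at least one edge and no universal vertex, so that every transversal has at least two elements (the thresholds below are then non-negative because $\mathcal{V}$ contains an edge and is itself a transversal). For the direction $\mathcal{H}$ threshold $\Rightarrow G$ TD, I would take a separating structure $(w,t)$ of $\mathcal{H}$ and set $w'(v)=w(v)$ for $v\in\mathcal{V}$, $w'(e')=0$ for $e'\in\mathcal{E}'$, and $t'=w(\mathcal{V})-t$. Combining the transversal identity with the defining property of $(w,t)$ gives, for every $A\subseteq\mathcal{V}$, that $A$ is a transversal iff $w(\mathcal{V}\setminus A)\le t$ iff $w'(A)\ge t'$; as $w'(D)=w'(A)$, the pair $(w',t')$ is a total domishold structure of $G$. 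For the converse I would start from a total domishold structure $(w',t')$ of $G$, put $w=w'|_{\mathcal{V}}$ and $t=w'(\mathcal{V})-t'$, and check that $A$ is a transversal iff $w'(A)\ge t'$ for all $A\subseteq\mathcal{V}$: for $|A|\ge 2$ this is the characterization of the previous paragraph applied to $D=A$, while for $|A|\le 1$ both sides are false (such an $A$ is neither a transversal, as $\mathcal{H}$ has non-empty edges and no universal vertex, nor a TD set, since a single vertex cannot dominate itself). Rewriting this equivalence via complementation yields $w(X)\le t\iff X$ contains no edge, so $\mathcal{H}$ is threshold.

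The one real obstacle is the domination of the clique vertices for small $A$, which is exactly why I isolated the case $|A|\le 1$ above: a singleton $\{v\}$ that meets every edge nevertheless fails to dominate $v$, so the clean equivalence ``TD $\iff$ transversal'' can fail. Such a singleton transversal exists precisely when some $v$ lies in every edge, i.e.\ when $\mathcal{H}$ has a universal vertex; and a vertex is universal in $\mathcal{H}$ if and only if it is universal in $G={\it SI}(\mathcal{H})$ (it is adjacent to all of $\mathcal{V}$ by construction, and to every $e'$ iff it lies in every edge). I would therefore remove universal vertices by induction: if $v_0$ is universal then ${\it SI}(\mathcal{H})-v_0={\it SI}(\mathcal{H}\ominus v_0)$, where $\mathcal{H}\ominus v_0$ deletes $v_0$ from $\mathcal{V}$ and from every edge, and Proposition~\ref{prop:dominating-vertex} (for $G$) together with Lemma~\ref{prop:univesal-vertex} (for $\mathcal{H}$) reduce the claim to the smaller instance $\mathcal{H}\ominus v_0$; iterating reaches a hypergraph with no universal vertex, where the argument of the second paragraph applies. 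Finally I would settle the degenerate cases separately: if $\mathcal{H}$ has no edge then $G$ is a clique, which is TD and whose hypergraph is trivially threshold; and if some edge equals $\{v_0\}$ for a universal $v_0$ — the only situation in which $\mathcal{H}\ominus v_0$ would acquire an empty edge — then $G-v_0$ has an isolated vertex and is TD by Proposition~\ref{prop:isolated-vertex}, while $\mathcal{H}$ is threshold via $w(v_0)=1$, all other weights $0$, and $t=0$, so both properties hold.
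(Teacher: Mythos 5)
Your proof is correct, and although its outer layer coincides with the paper's, its core takes a genuinely different route. Both proofs peel off universal vertices in exactly the same way: a universal vertex $v_0$ of $\mathcal{H}$ is universal in ${\it SI}(\mathcal{H})$, one has ${\it SI}(\mathcal{H})-v_0\cong{\it SI}(\mathcal{H}\ominus v_0)$, and Proposition~\ref{prop:dominating-vertex} together with Lemma~\ref{prop:univesal-vertex} transfers both properties to the smaller instance. In the main (no-universal-vertex) case, however, the paper never touches total dominating sets directly: it shows that the neighborhood function $f_G$ of $G={\it SI}(\mathcal{H})$ is logically equivalent to $\bigvee_{e\in\mathcal{E}}\bigwedge_{u\in e}x_u$ (the term of each clique vertex $v$ is absorbed by some $N_G(e')=e\subseteq N_G(v)$), and then chains the equivalences of Proposition~\ref{prop:relation} and Proposition~\ref{lem:reduction}. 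You argue combinatorially instead: a set $D$ with $|D\cap\mathcal{V}|\ge 2$ is a TD set iff $D\cap\mathcal{V}$ is a transversal, transversals have size at least two when no universal vertex exists, and separating structures convert to total domishold structures (and back) via the explicit complementation $t'=w(\mathcal{V})-t$. Your route is elementary and self-contained (no Boolean-function machinery, no duality) and yields explicit weight formulas; the paper's route is shorter because Proposition~\ref{lem:reduction} already encapsulates the complementation step inside the dual function $f_G^d$, and its equivalence chain is reused verbatim in Theorem~\ref{prop:equivalences}. A genuine plus of your write-up is the degenerate case where $\{v_0\}$ is an edge and $v_0$ is universal: this must be split off \emph{before} deleting $v_0$, since deletion would create an empty edge, which destroys thresholdness while ${\it SI}$ gains an isolated vertex and remains TD; you handle it correctly, whereas the paper's base case glosses over exactly this point (its assertion that $\mathcal{E}=\{\emptyset\}$ yields a threshold hypergraph whose split-incidence graph is $K_2$ is false under the paper's own definitions --- that graph is $2K_1$, and no non-negative threshold can work when every set contains the empty edge). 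Finally, note that, like the paper, you tacitly assume all edges of the input hypergraph are non-empty (you use this when asserting that $\mathcal{V}$ is a transversal); this convention is in fact necessary, since the proposition fails for hypergraphs with an empty edge.
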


\begin{proof}
Let $\cal H=(V,E)$ be a hypergraph. We prove the statement by induction on $|{\cal V}|$. The case $|{\cal V}| = 1$ is trivial since $\cal H$ is threshold, and  ${\it SI}({\cal H})$ is isomorphic to
either  $K_1$ (if ${\cal E} = \emptyset$) or to $K_2$ (if ${\cal E} = \{\emptyset\}$), hence total domishold.

Suppose now that $|{\cal V}|>1$.

If ${\cal H}$ has a universal vertex $v\in {\cal V}$, then let ${\cal H}' = ({\cal V}',{\cal E}')$  be the hypergraph obtained by deleting $v$ from ${\cal H}$, that is, ${\cal V}' = {\cal V}\setminus\{v\}$ and ${\cal E}' = \{e\setminus\{v\}\mid e\in {\cal E}'\}$.
By the inductive hypothesis, ${\cal H}'$ is threshold if and only if its split-incidence graph ${\it SI}({\cal H}')$ is total domishold.
Notice that the split-incidence graph of ${\cal H}'$ is isomorphic to the graph obtained from
the split-incidence graph of ${\cal H}$ by deleting from it a universal vertex (namely $v$).
Hence, Proposition~\ref{prop:dominating-vertex} and Lemma~\ref{prop:univesal-vertex} imply that
$\cal H$ is threshold if and only if its split-incidence graph is total domishold.

We may thus assume that $\cal H$ does not have any universal vertices. Let $G = {\it SI}({\cal H})$ be its split-incidence graph, with ${\cal E}'$ as above.
Let $f_G:\{0,1\}^{V(G)}\to \{0,1\}$ denote the neighborhood function of $G$,
$$f_G(x) = \bigvee_{v\in V(G)} \bigwedge_{u\in N_G(v)}x_u\,,$$
let $f_G':\{0,1\}^{V(G)}\to \{0,1\}$ be the function given by
$$f_G'(x) = \bigvee_{e\in {\cal E}} \bigwedge_{u\in e}x_u\,,$$
and let $f_H$ denote the restriction of $f_G'$ to ${\cal V}$, that is,
$f_{\cal H}:\{0,1\}^{\cal V}\to \{0,1\}$ is given by
$$f_{\cal H}(x) = f_G'(x) = \bigvee_{e\in {\cal E}} \bigwedge_{u\in e}x_u\,.$$

We will show that the following conditions are equivalent, which will imply the proposition:

$(i)$ Hypergraph  $\cal H$ is threshold.

$(ii)$ Function $f_{\cal H}$ is threshold.

$(iii)$ Function $f_G'$ is threshold.

$(iv)$ Function $f_G$ is threshold.

$(v)$ Graph $G$ is total domishold.

The equivalence between $(i)$ and $(ii)$ follows by Proposition~\ref{prop:relation}.
The equivalence between $(ii)$ and $(iii)$ is easy to establish directly from the definition, using the fact that $f_G'$ does not depend on any variable of the form $x_{e'}$ where $e'\in {\cal E}'$ (see~\cite{CH11}).
The equivalence between $(iii)$ and $(iv)$ can be justified as follows.
Since $\cal H$ has no universal vertices, for every vertex $v\in \cal V$ in the clique there exists
a vertex $e'\in {\cal E}'$ in the independent set such that $v\not\in e$.
Hence, $N_G(e') = e\subseteq N_G(v)$.
In particular, this implies that the functions $f_G$ and $f_G'$ are
logically equivalent.
Finally, the equivalence between $(iv)$ and $(v)$ follows from Proposition~\ref{lem:reduction}.
\end{proof}

Proposition~\ref{prop:hypergraph-to-graph} admits a natural converse.
A {\it split partition} of a split graph $G$ is a pair $(K,I)$ such that $K$ is a clique, $I$ is an independent set,
$K\cup I = V(G)$ and $K\cap I = \emptyset$. Given a split graph $G$ with a split partition $(K,I)$, its {\it $I$-neighborhood hypergraph}
is the hypergraph ${\cal IN}(G) = (K,{\cal E})$ where ${\cal E} = \{N_G(v)\mid v\in I\}$. Recall that ${\cal E}$ is a multiset, that is, if two vertices in $I$ have the same neighborhoods in $G$, then we keep in ${\cal E}$ both copies of the neighborhood.

\begin{corollary}\label{cor:graph-to-hypergraph}
For every split graph $G$
with a split partition $(K,I)$, it holds that
$G$ is total domishold if and only if its
$I$-neighborhood hypergraph ${\cal IN}(G)$ is threshold.
\end{corollary}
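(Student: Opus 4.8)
The plan is to reduce this corollary to the already-established Proposition~\ref{prop:hypergraph-to-graph} by showing that the split-incidence graph construction and the $I$-neighborhood hypergraph construction are, up to universal vertices in the hypergraph, inverse to one another. The key observation is that for a split graph $G$ with split partition $(K,I)$, the hypergraph ${\cal IN}(G) = (K,\{N_G(v)\mid v\in I\})$ is built precisely so that $G$ resembles the split-incidence graph ${\it SI}({\cal IN}(G))$: the clique $K$ plays the role of the vertex set, and each independent-set vertex $v\in I$ corresponds to the edge $N_G(v)$.

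First I would make this correspondence precise. Starting from $G$ and forming ${\cal H} := {\cal IN}(G)$, I would compare $G$ with ${\it SI}({\cal H})$. In ${\it SI}({\cal H})$ the clique is $K$ and the independent set is $\{e'\mid e\in {\cal E}\}$, with $v'\in K$ adjacent to $e'$ iff $v'\in e$. Under the natural bijection sending each $v\in I$ to the edge $N_G(v)\in {\cal E}$, a clique vertex $u\in K$ is adjacent to $v\in I$ in $G$ exactly when $u\in N_G(v)$, which matches adjacency in ${\it SI}({\cal H})$; and the clique $K$ induces $K_{|K|}$ in both graphs. Hence $G$ and ${\it SI}({\cal H})$ are isomorphic, at least once we handle the bookkeeping. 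Combined with Proposition~\ref{prop:hypergraph-to-graph}, which states ${\cal H}$ is threshold iff ${\it SI}({\cal H})$ is total domishold, this would immediately yield that ${\cal IN}(G)$ is threshold iff $G$ is total domishold.

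The main obstacle will be the discrepancies between $G$ and ${\it SI}({\cal IN}(G))$ that arise from degenerate configurations, and I expect these to require the auxiliary lemmas rather than a clean isomorphism. Two issues stand out. An independent-set vertex $v\in I$ with $N_G(v)=\emptyset$ is an isolated vertex of $G$; the corresponding empty edge in ${\cal IN}(G)$ forces ${\cal H}$ to be trivially non-threshold only if one is not careful, but in fact an empty hyperedge makes \emph{every} set contain an edge, so thresholdness fails unless we check the weight/threshold conventions — and on the graph side Proposition~\ref{prop:isolated-vertex} says $G$ is TD regardless. I would treat the isolated-vertex case separately and verify both sides agree. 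More substantively, a clique vertex $u\in K$ that is universal in $G$ (adjacent to all of $I$) lies in every neighborhood $N_G(v)$, hence is a universal vertex of ${\cal IN}(G)$; symmetrically, clique vertices not in any edge correspond to vertices that are isolated from $I$. These are exactly the situations handled by Lemma~\ref{prop:univesal-vertex} (adding/removing a universal hypergraph vertex preserves thresholdness) and Proposition~\ref{prop:dominating-vertex} (adding/removing a universal graph vertex preserves the TD property).

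Accordingly, my strategy is to mirror the inductive proof of Proposition~\ref{prop:hypergraph-to-graph}: induct on $|K|$, peeling off universal vertices of $G$ (which correspond to universal vertices of ${\cal IN}(G)$) using Proposition~\ref{prop:dominating-vertex} and Lemma~\ref{prop:univesal-vertex} in tandem, until reaching a core split graph whose $I$-neighborhood hypergraph has no universal vertex, at which point $G$ is genuinely isomorphic to ${\it SI}({\cal IN}(G))$ and Proposition~\ref{prop:hypergraph-to-graph} applies directly. Alternatively, and more cleanly, I would simply verify that $G \cong {\it SI}({\cal IN}(G))$ always holds (accounting for the empty-edge and multiplicity conventions, since ${\cal E}$ is a multiset and so records repeated neighborhoods faithfully), and then invoke Proposition~\ref{prop:hypergraph-to-graph} as a black box. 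I expect the multiset handling and the empty-neighborhood edge case to be the only genuinely delicate points; everything else is a direct translation of adjacency into incidence.
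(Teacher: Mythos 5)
Your proposal is correct and, in its ``cleaner alternative'' form, is exactly the paper's proof: the paper settles the corollary in one line by observing that $G$ is isomorphic to the split-incidence graph ${\it SI}({\cal IN}(G))$ and invoking Proposition~\ref{prop:hypergraph-to-graph}. The degenerate cases you worry about (isolated vertices of $I$ giving empty hyperedges, universal clique vertices) do not obstruct the isomorphism at all---since ${\cal E}$ is a multiset and adjacency translates verbatim, $G\cong {\it SI}({\cal IN}(G))$ holds unconditionally---so the inductive peeling via Lemma~\ref{prop:univesal-vertex} and Proposition~\ref{prop:dominating-vertex} is unnecessary.
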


\begin{proof}
The proposition follows immediately from Proposition~\ref{prop:hypergraph-to-graph} and the observation that
$G$ is isomorphic to the split-incidence graph of its $I$-neighborhood hypergraph ${\cal IN}(G)$.
\end{proof}

We conclude this section with a theorem summarizing several statements equivalent to a fact that a given graph $G$ is total domishold.
To state the theorem , we need two more definitions. To every graph $G= (V,E)$, we associate a split graph $S(G)$
defined as follows:
$V(S(G)) = V\cup W$, where
$$W = \{N_G(v)\mid v\in V~\textrm{and}~(\forall u\in V\setminus\{v\})(N_G(u)\not\subset N_G(v))\}\,,$$
$V$ is a clique, $W$ is an independent set, and
$v\in V$ is adjacent to $X\in W$ if and only if $v\in X$.
(In the above definition of $W$, relation $\subset$ denotes strict set inclusion.)

The following derived hypergraph will also be useful. Given a graph $G$, the {\it reduced neighborhood hypergraph} of $G$ is the hypergraph ${\cal RN}(G) = ({\cal V}, {\cal E})$ where
${\cal V} = V(G)$ and ${\cal E}$ is a set (that is, it does not have any duplicated elements) defined by
$${\cal E} = \{S\mid S\subseteq V(G)\,,~(\exists v\in V(G))(S = N_G(v))~\textrm{and}~(\forall u\in V\setminus\{v\})(N_G(u)\not\subset N_G(v))\}\,.$$

\begin{theorem}\label{prop:equivalences}
For every graph $G$, the following statements are equivalent:
\begin{enumerate}
  \item[$(i)$] Graph $G$ is total domishold.
  \item[$(ii)$] The neighborhood function $f_G$ of $G$ is threshold.
  \item[$(iii)$] The Boolean function $f:\{0,1\}^{V(G)}\to \{0,1\}$ given by the complete DNF $\phi$ of $f_G$ is threshold.
  \item[$(iv)$] The hypergraph ${\cal H}(\phi)$ (where $\phi$ is the complete DNF of $f_G$) is threshold.
  \item[$(v)$] The reduced neighborhood hypergraph ${\cal RN}(G)$ of $G$ is threshold.
  \item[$(vi)$] The split-incidence graph ${\it SI}({\cal RN}(G))$ of ${\cal RN}(G)$ is total domishold.
  \item[$(vii)$] The graph $S(G)$ is total domishold.
\end{enumerate}
\end{theorem}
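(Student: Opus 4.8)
The plan is to prove the seven conditions equivalent by walking down the list from $(i)$ to $(vii)$ and establishing each consecutive equivalence separately; since the list is linearly ordered, this yields all pairwise equivalences. Most of the consecutive links turn out to be either immediate citations of results proved earlier in this section or observations that two named objects coincide, and only the link $(iv)\Leftrightarrow(v)$ carries genuine combinatorial content.

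I would dispatch the routine links first. The equivalence $(i)\Leftrightarrow(ii)$ is precisely Proposition~\ref{lem:reduction}. For $(ii)\Leftrightarrow(iii)$, I note that the complete DNF $\phi$ of $f_G$ is by definition a representation of $f_G$ itself, so the function $f$ in $(iii)$ equals $f_G$ and the two statements are identical. The step $(iii)\Leftrightarrow(iv)$ is the second assertion of Proposition~\ref{prop:relation} applied to the positive DNF $\phi$. The step $(v)\Leftrightarrow(vi)$ is a direct instance of Proposition~\ref{prop:hypergraph-to-graph} with ${\cal H}={\cal RN}(G)$. For $(vi)\Leftrightarrow(vii)$ I would simply compare definitions: the split-incidence graph ${\it SI}({\cal RN}(G))$ has clique $V(G)$, one independent vertex per hyperedge of ${\cal RN}(G)$, and $v\in V(G)$ adjacent to a hyperedge $e$ exactly when $v\in e$; this matches the graph $S(G)$ verbatim, whose independent side $W$ is exactly the edge set of ${\cal RN}(G)$ and whose adjacencies are the same membership relation. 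Hence $S(G)={\it SI}({\cal RN}(G))$ and the equivalence is trivial.

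The substantive step is $(iv)\Leftrightarrow(v)$, which I would obtain by proving the stronger fact that ${\cal H}(\phi)$ and ${\cal RN}(G)$ are literally the same hypergraph on vertex set $V(G)$. Because $\phi$ is the complete DNF of $f_G$, its terms are exactly the prime implicants of $f_G$, so it suffices to identify these prime implicants with the inclusion-minimal neighborhoods. To do this I would use that, $f_G$ being positive, a term $\bigwedge_{u\in S}x_u$ is an implicant if and only if $f_G(\chi_S)=1$ (evaluating at larger inputs can only help a positive function), and by the defining expression $f_G(x)=\bigvee_{v\in V}\bigwedge_{u\in N(v)}x_u$ this holds if and only if $S\supseteq N_G(v)$ for some $v$. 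The minimal such $S$ are then precisely the neighborhoods $N_G(v)$ that properly contain no other neighborhood, which is exactly the condition defining the edge set ${\cal E}$ of ${\cal RN}(G)$; the use of strict inclusion, together with the fact that ${\cal RN}(G)$ stores its edges as a set, handles the case of coinciding neighborhoods $N_G(u)=N_G(v)$. Thus ${\cal H}(\phi)={\cal RN}(G)$ and thresholdness transfers between them.

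I expect the main obstacle to lie in this prime-implicant identification and in checking the degenerate cases, rather than in any deep argument. In particular, an isolated vertex gives $N_G(v)=\emptyset$, which makes $f_G$ constant and forces $\emptyset$ into ${\cal E}$; I would verify that the chain remains consistent there, noting that the empty hyperedge appears in $S(G)$ as an isolated vertex, so that $S(G)$ is total domishold by Proposition~\ref{prop:isolated-vertex}, in agreement with $G$ being total domishold. With the prime-implicant identity and the two ``same object'' observations in hand, concatenating the consecutive equivalences proves the theorem.
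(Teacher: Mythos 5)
Your proof is correct and follows essentially the same route as the paper's: the identical chain of consecutive equivalences, with $(i)\Leftrightarrow(ii)$ from Proposition~\ref{lem:reduction}, $(iii)\Leftrightarrow(iv)$ from Proposition~\ref{prop:relation}, $(v)\Leftrightarrow(vi)$ from Proposition~\ref{prop:hypergraph-to-graph}, and the remaining links via the observations that ${\cal H}(\phi)$ coincides with ${\cal RN}(G)$ and that ${\it SI}({\cal RN}(G))$ coincides with $S(G)$. The only difference is that you spell out the prime-implicant identification behind ${\cal H}(\phi)={\cal RN}(G)$, which the paper states as a bare observation.
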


\begin{proof}
The equivalence between $(i)$ and $(ii)$ was established in Proposition~\ref{lem:reduction}.
The equivalence between $(ii)$ and $(iii)$ is trivial, since $f_G$ is equivalent to the function given by its complete DNF $\phi$.
The equivalence between $(iiv)$ and $(iv)$ follows from Proposition~\ref{prop:relation}.
The equivalence between $(iv)$ and $(v)$ follows from the observation that
reduced neighborhood hypergraphs
 ${\cal H}(\phi)$ and ${\cal RN}(G)$ are isomorphic to each other.
The equivalence between $(v)$ and $(vi)$ follows from Proposition~\ref{prop:hypergraph-to-graph}.
Finally, the equivalence between $(vi)$ and $(vii)$ follows from the observation that
the split-incidence graph ${\it SI}({\cal RN}(G))$ is isomorphic to the split graph $S(G)$.
\end{proof}

\section{Hereditary Total Domishold Graphs}\label{sec:HTD-partial-characterizations}

In this section, we prove our main result: two characterizations of hereditary total domishold graphs.
Our proofs will be based on a new family of hypergraphs defined similarly as the Sperner hypergraphs. Recall that a hypergraph $\cal H = (V,E)$ is said to be {\em Sperner} (or: a {\it clutter}) if no edge of $\cal H$ contains another edge, or, equivalently, if for every two distinct edges $e$ and $f$ of $\cal H$, it holds that $\min\{|e\setminus f|,|f\setminus e|\}\ge 1\,.$
This motivates the following definition.

\begin{definition}\label{def:TD}
A hypergraph $\cal H = (V,E)$ is said to be {\em dually Sperner}
if for every two distinct edges $e$ and $f$ of $\cal H$, it holds that
$$\min\{|e\setminus f|,|f\setminus e|\}\le 1\,.$$
\end{definition}

\begin{lemma}\label{lem:dually-Sperner-threshold}
Every dually Sperner hypergraph is threshold.
\end{lemma}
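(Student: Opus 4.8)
The plan is to establish thresholdness through the Chow--Elgot characterization of threshold hypergraphs (Theorem~\ref{thm:characterization}): I will show that a dually Sperner hypergraph $\mathcal{H}=(\mathcal{V},\mathcal{E})$ admits no \emph{summability certificate}, i.e.\ no integer $n\ge 2$ together with subsets $A_1,\dots,A_n$ each containing an edge and subsets $B_1,\dots,B_n$ each containing no edge with $\sum_i\mathbb{1}_{A_i}=\sum_i\mathbb{1}_{B_i}$. (If $\emptyset\in\mathcal{E}$, then no set is edge-free, so no $B_i$ can exist, the not-threshold condition of Theorem~\ref{thm:characterization} fails, and $\mathcal{H}$ is threshold trivially; hence I may assume every edge is nonempty.) Assuming for contradiction that a certificate exists, I would fix one with $n$ minimum and, subject to that, with $\sum_i|A_i|$ minimum.

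First I would show that minimality forces each $A_i$ to be an edge. If some $A_i\supsetneq e$ for an edge $e\subseteq A_i$, choose $v\in A_i\setminus e$; since the two incidence vectors agree at $v$ and $v\in A_i$, some $B_j$ contains $v$, and replacing $(A_i,B_j)$ by $(A_i\setminus\{v\},\,B_j\setminus\{v\})$ keeps $A_i\setminus\{v\}\supseteq e$ a true point, keeps $B_j\setminus\{v\}$ edge-free, preserves the vector equality, and strictly decreases $\sum_i|A_i|$ --- a contradiction. Thus every $A_i$ is an edge $\epsilon_i$, and what must be refuted becomes purely combinatorial: \emph{whenever edges $\epsilon_1,\dots,\epsilon_n$ and sets $B_1,\dots,B_n$ satisfy $\sum_i\mathbb{1}_{\epsilon_i}=\sum_i\mathbb{1}_{B_i}$, some $B_j$ contains an edge}, contradicting the assumption that all $B_j$ are edge-free.

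The heart of the argument is the base case $n=2$, where dually Spernerness enters. Here $\mathbb{1}_{\epsilon_1}+\mathbb{1}_{\epsilon_2}=\mathbb{1}_{B_1}+\mathbb{1}_{B_2}$ forces $\epsilon_1\cup\epsilon_2=B_1\cup B_2=:S$ and $\epsilon_1\cap\epsilon_2=B_1\cap B_2=:I$; writing $D=S\setminus I$, the two edges split $D$ as $D=A_1\sqcup A_2$ with $\epsilon_p=I\sqcup A_p$, and the two $B$'s split it as $D=C_1\sqcup C_2$ with $B_p=I\sqcup C_p$. Applying Definition~\ref{def:TD} to $\epsilon_1,\epsilon_2$ gives $\min(|A_1|,|A_2|)\le 1$, say $|A_1|\le 1$. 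If $A_1=\emptyset$ then $\epsilon_1=I\subseteq B_1$; if $A_1=\{a\}$ then $a\in D=C_1\sqcup C_2$ and $\epsilon_1=I\cup\{a\}$ is contained in whichever of $B_1,B_2$ receives $a$. In every case some $B_j$ contains the edge $\epsilon_1$, the desired contradiction.

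For general $n$ I would proceed by induction, peeling off a maximum-size edge $e$ among $\epsilon_1,\dots,\epsilon_n$: since $|\epsilon_i|\le|e|$ gives $|\epsilon_i\setminus e|\le|e\setminus\epsilon_i|$, the dually Sperner inequality yields $|\epsilon_i\setminus e|\le 1$ for every $i$, so each chosen edge lies almost inside $e$ (all but at most one vertex). Using that every vertex of $e$ is covered on the $B$-side (because $e$ is itself one of the $\epsilon_i$), the aim is to perform an exchange on the incidence vectors --- combining or trimming a pair of the $A$'s together with a pair of the $B$'s while preserving both the vector equality and the property that the modified $A$-sets stay true points --- producing either a $B_j$ that directly contains an edge or a strictly smaller certificate, contradicting minimality. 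I expect this exchange to be the main obstacle: naive union/intersection swaps preserve the incidence sum but need not keep intersections of edges as true points, so the reduction must be steered by the ``almost-inside-$e$'' structure, together with the finer fact that the inside-parts $\epsilon_i\cap e$ of edges sharing an outside vertex are themselves dually Sperner while those with distinct outside vertices are nested. Turning this structure into a clean reduction to the case $n=2$ is where the bulk of the work lies.
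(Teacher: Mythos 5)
Your proof is incomplete: only the case $n=2$ of the Chow--Elgot certificate is actually refuted. Theorem~\ref{thm:characterization} quantifies over all $n\ge 2$, and your treatment of general $n$ is a plan, not a proof --- you yourself flag the exchange step as ``the main obstacle'' and ``where the bulk of the work lies.'' The normalization making each $A_i$ an edge is correct, and the $n=2$ analysis is correct, but nothing you write rules out a certificate with $n\ge 3$; the nested/dually-Sperner structure of the inside-parts $\epsilon_i\cap e$ that you mention is true, yet it is never converted into a working reduction. There is also a sign that your setup points the wrong way: you peel off a \emph{maximum}-size edge $e$, which tells you that every $\epsilon_i$ sits almost inside $e$ (i.e.\ $|\epsilon_i\setminus e|\le 1$); but what a counting argument needs is that a single fixed edge sticks out of each $A_i$ by at most one vertex, and for that the right choice is a \emph{minimum}-size edge (note that with the maximum edge, $|e\setminus\epsilon_i|$ can be arbitrarily large, since dually Sperner permits $\epsilon_i\subset e$).

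In fact no induction, no minimal counterexample, and no exchange argument are needed: the paper disposes of all $n$ at once with a single double count. For each $i$ let $e_i\subseteq A_i$ be an edge, and choose $i^*$ with $|e_{i^*}|$ minimum. Then $|e_{i^*}\setminus e_i|\le |e_i\setminus e_{i^*}|$ for every $i$, so the dually Sperner property forces $|e_{i^*}\setminus e_i|\le 1$. Since no $B_i$ contains the edge $e_{i^*}$, each $|e_{i^*}\setminus B_i|\ge 1$, whence $n\le\sum_{i}|e_{i^*}\setminus B_i|$. Summing the degree condition \eqref{degree} over the vertices $v\in e_{i^*}$ gives $\sum_i|e_{i^*}\setminus B_i| = \sum_i|e_{i^*}\setminus A_i|$, and since $e_i\subseteq A_i$ this is at most $\sum_{i\ne i^*}|e_{i^*}\setminus e_i|\le n-1$, a contradiction. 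Your $n=2$ argument is essentially this count specialized to two indices; replacing your planned induction by this global count (and dropping the now-unnecessary minimality and edge-normalization steps) completes the proof.
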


\begin{proof}
Suppose for a contradiction that there exists a dually Sperner hypergraph $\cal H= (V,E)$ that is not threshold.
By Theorem~\ref{thm:characterization}, there exists an integer $n\ge 2$
and $n$ (not necessarily distinct) subsets $A_1,\ldots, A_n$ of $\cal V$, each containing an edge of $\cal H$,
and $n$ (not necessarily distinct) subsets $B_1,\ldots, B_n$ of $\cal V$, each containing no edge of $\cal H$, such that
for every vertex $v\in \cal V$, condition $(\ref{degree})$ holds.
For every $i\in [n]$, let $e_i$ be an edge of $\cal H$ contained in $A_i$.
Let $i^*\in [n]$ be such that $|e_{i^*}|\le |e_i|$ for all $i\in [n]$.
In particular, this implies that for every $i\in [n]$, it holds that
$|e_{i^*}\setminus e_i|\le |e_i\setminus e_{i^*}|\,,$
which, since $\cal H$ is dually Sperner, implies
\begin{equation}\label{eq1}
|e_{i^*}\setminus e_i|\le 1
\end{equation}
for every $i\in [n]$.
On the other hand, since no $B_i$ contains the edge $e_{i^*}$, we have, for all $i\in [n]$, the inequality
\begin{equation}\label{eq2}
1\le |e_{i^*}\setminus B_i|\,.
\end{equation}
Adding up the inequalities $(\ref{eq2})$ for all $i\in [n]$, we obtain
$n\le \sum_{i\in [n]}|e_{i^*}\setminus B_i|\,.$
This implies the following contradicting chain of equations and inequalities
$$n\le \sum_{i\in [n]}|e_{i^*}\setminus B_i|
= \sum_{i\in [n]}\sum_{v\in e_{i^*}\setminus B_i}1
= \sum_{v\in e_{i^*}}\sum_{i\,:\,v\not\in B_i}1
= \sum_{v\in e_{i^*}}\left(n-|\{i\,:\,v\in B_i\}|\right)$$
$$= \sum_{v\in e_{i^*}}\left(n-|\{i\,:\,v\in A_i\}|\right)
= \sum_{v\in e_{i^*}}\sum_{i\,:\,v\not\in A_i}1
= \sum_{i\in [n]}\sum_{v\in e_{i^*}\setminus A_i}1
= \sum_{i\in [n]}|e_{i^*}\setminus A_i|$$$$
\le \sum_{i\in [n]}|e_{i^*}\setminus e_i|
=  \sum_{{\substack{i\in [n]\\i\neq i^*}}}|e_{i^*}\setminus e_i|
\le   \sum_{{\substack{i\in [n]\\i\neq i^*}}}1
=  n-1\,.$$
The first equality in the second line follows from condition $(\ref{degree})$, while
the first inequality in the third line follows from the fact that $e_i\subseteq A_i$, which implies
$e_{i^*}\setminus A_i\subseteq e_{i^*}\setminus e_i$.
The last inequality follows from (\ref{eq1}).

This contradiction completes the proof.\end{proof}

Let us say that a graph $G$ is {\it $2$-asummable} if its neighborhood function $f_G$ is $2$-asummable, and {\it hereditary $2$-assumable} if
every induced subgraph of it is $2$-asummable. It follows from Theorem~\ref{thm:BF:characterization} and Theorem~\ref{prop:equivalences} that
every total domishold graph is $2$-asummable. Notice that the converse is not true: there exists a $2$-asummable (split) graph $G$ that is not total domishold. This can be easily derived using the results of Section~\ref{sec:TD-graphs-BFs-hypergraphs} and the fact that not every $2$-asummable positive Boolean function is threshold (Theorem 9.15 in~\cite{CH11}). In Theorem~\ref{prop:forbidden-induced-subgraphs} below, we prove several characterizations of HTD graphs, which, in particular, imply that every hereditary $2$-asummable graph is HTD.

\begin{theorem}\label{prop:forbidden-induced-subgraphs}
For every graph $G$, the following are equivalent:
\begin{enumerate}
  \item\label{item1}  $G$ is hereditary total domishold.
  \item\label{item2}  $G$ is hereditary $2$-assumable.
  \item\label{item3}  $G$ is $\{F_1,\ldots, F_{13}\}$-free, where $F_1,\ldots, F_{13}$ are the graphs depicted in Fig.~\ref{fig:forbidden-induced-subgraphs}.
\end{enumerate}
\end{theorem}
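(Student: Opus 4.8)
The plan is to prove the theorem by establishing the cycle of implications $(\ref{item1}) \Rightarrow (\ref{item2}) \Rightarrow (\ref{item3}) \Rightarrow (\ref{item1})$. The first implication is essentially free: as noted in the paragraph preceding the statement, Theorem~\ref{thm:BF:characterization} together with Theorem~\ref{prop:equivalences} shows that every total domishold graph is $2$-asummable (a threshold Boolean function is asummable, hence in particular $2$-asummable). Applying this to every induced subgraph gives that hereditary total domishold implies hereditary $2$-asummable, so $(\ref{item1}) \Rightarrow (\ref{item2})$.

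For $(\ref{item2}) \Rightarrow (\ref{item3})$ I would argue by contraposition: I would verify for each of the thirteen graphs $F_1, \ldots, F_{13}$ that it is \emph{not} $2$-asummable, i.e.\ that its neighborhood function $f_{F_j}$ admits a $2$-summing certificate. Concretely, for each $F_j$ I would exhibit two (not necessarily distinct) false points $x^1, x^2$ and two true points $y^1, y^2$ of $f_{F_j}$ with $x^1 + x^2 = y^1 + y^2$; translating through the neighborhood function, this amounts to producing two non-total-dominating vertex sets and two total dominating sets whose characteristic vectors pairwise sum to the same integer vector. Since $2$-asummability is a hereditary property (any $2$-summing certificate in an induced subgraph lifts to one in the whole graph by padding with zeros outside the subgraph), containing any $F_j$ as an induced subgraph forces failure of hereditary $2$-asummability. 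This step is finite but tedious; its correctness rests on correctly reading off the neighborhoods from the figure, which I would present as a short table of the certificates rather than as prose.

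The implication $(\ref{item3}) \Rightarrow (\ref{item1})$ is where the genuine content lies, and it is the main obstacle. Here the goal is to show that an $\{F_1, \ldots, F_{13}\}$-free graph $G$ has \emph{every} induced subgraph total domishold; by heredity of the forbidden-subgraph condition it suffices to show that $G$ itself is total domishold. By Theorem~\ref{prop:equivalences}, this is equivalent to the thresholdness of the reduced neighborhood hypergraph ${\cal RN}(G)$, and by Lemma~\ref{lem:dually-Sperner-threshold} it would be enough to prove that ${\cal RN}(G)$ is \emph{dually Sperner}. Thus the heart of the argument is a structural claim: \emph{if $G$ is $\{F_1,\ldots,F_{13}\}$-free, then any two distinct edges $e = N_G(u)$ and $f = N_G(v)$ of ${\cal RN}(G)$ satisfy $\min\{|e \setminus f|, |f \setminus e|\} \le 1$.} I would prove the contrapositive: if there exist vertices $u, v$ whose inclusion-maximal-neighborhoods $e, f$ satisfy $|e \setminus f| \ge 2$ and $|f \setminus e| \ge 2$, then picking two witnesses in each of $e \setminus f$ and $f \setminus e$, together with $u$, $v$, and possibly vertices realizing the required adjacencies, assembles an induced copy of one of the thirteen forbidden graphs. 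The difficulty is that the configuration of the four witness vertices relative to $u, v$ and to each other admits many adjacency patterns, and the list $F_1, \ldots, F_{13}$ is presumably exactly the (carefully minimized) set of obstructions arising from this case analysis.

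I would organize the hard direction as a case split on the adjacencies among the four ``difference'' vertices (two in $e\setminus f$, two in $f\setminus e$) and on whether $u,v$ are adjacent, reducing each case to a recognizable small induced subgraph; the dually-Sperner conclusion then delivers thresholdness of ${\cal RN}(G)$ and hence, via Theorem~\ref{prop:equivalences}, that $G$ is total domishold. The expected obstacle is ensuring the case analysis is \emph{exhaustive} and that each forbidden graph is genuinely realized as an \emph{induced} (not merely subgraph) configuration, accounting for the reduced nature of ${\cal RN}(G)$ (edges are inclusion-maximal neighborhoods), which constrains which neighborhoods can coexist and is precisely what keeps the obstruction list finite.
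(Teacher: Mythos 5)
Your plan coincides with the paper's own proof in all three implications: $(\ref{item1})\Rightarrow(\ref{item2})$ is immediate from Proposition~\ref{lem:reduction} (equivalently, Theorem~\ref{prop:equivalences}) together with Theorem~\ref{thm:BF:characterization}; $(\ref{item2})\Rightarrow(\ref{item3})$ is proved by producing a $2$-summing certificate for each $F_j$; and $(\ref{item3})\Rightarrow(\ref{item1})$ is proved exactly as you outline, by showing that ${\cal RN}(G)$ is dually Sperner for every $\{F_1,\ldots,F_{13}\}$-free graph $G$ and then invoking Lemma~\ref{lem:dually-Sperner-threshold} and Theorem~\ref{prop:equivalences}; the paper's case analysis is organized precisely as you predict (first on whether $uv\in E(G)$, then on the adjacencies among two witnesses in $e\setminus f$ and two in $f\setminus e$). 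Incidentally, for $(\ref{item2})\Rightarrow(\ref{item3})$ the paper avoids thirteen separate tables by a uniform certificate: in each $F_j$ take two degree-$2$ vertices $u,v$ with disjoint neighborhoods $N(u)=\{a,b\}$, $N(v)=\{c,d\}$ (for $F_1,F_2$ take $c=u$, $b=v$); then $x^{N(u)},x^{N(v)}$ are true points, $x^{\{a,c\}},x^{\{b,d\}}$ are false points, and $x^{N(u)}+x^{N(v)}=x^{\{a,c\}}+x^{\{b,d\}}$.

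Two inaccuracies in your write-up should be repaired, though neither damages the overall structure. First, your parenthetical claim that $2$-asummability is hereditary because a certificate lifts by zero-padding is false: the graph obtained from $C_4$ by attaching a pendant vertex is total domishold, hence $2$-asummable, yet it contains $C_4=F_1$, which is $2$-summable. Zero-padding fails because a padded false point may contain the neighborhood of a vertex \emph{outside} the induced subgraph (here, the pendant vertex's neighborhood $\{u\}$), turning it into a true point of $f_G$. Fortunately you do not need this claim: hereditary $2$-asummability quantifies over all induced subgraphs by definition, so exhibiting $F_j$ as a non-$2$-asummable induced subgraph of $G$ already refutes $(\ref{item2})$. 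Second, the edges of ${\cal RN}(G)$ are the neighborhoods $N_G(v)$ that do not \emph{properly contain} any other neighborhood---a minimality, not a maximality, condition, so your phrase ``inclusion-maximal neighborhoods'' has it backwards. This is not pedantry: in the case $uv\in E(G)$, the four vertices $b,u,v,d$ alone induce only a $P_4$, which is not forbidden, and the only way to reach a contradiction is to apply this very condition to $w=u$ (and symmetrically $w=v$) to manufacture a neighbor $d'$ of $d$ missing $u$ and a neighbor $b'$ of $b$ missing $v$, from which the forbidden subgraph is assembled. Read as maximality, that step would not go through.
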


\begin{figure}[h!]
  \begin{center}
\includegraphics[width=\linewidth]{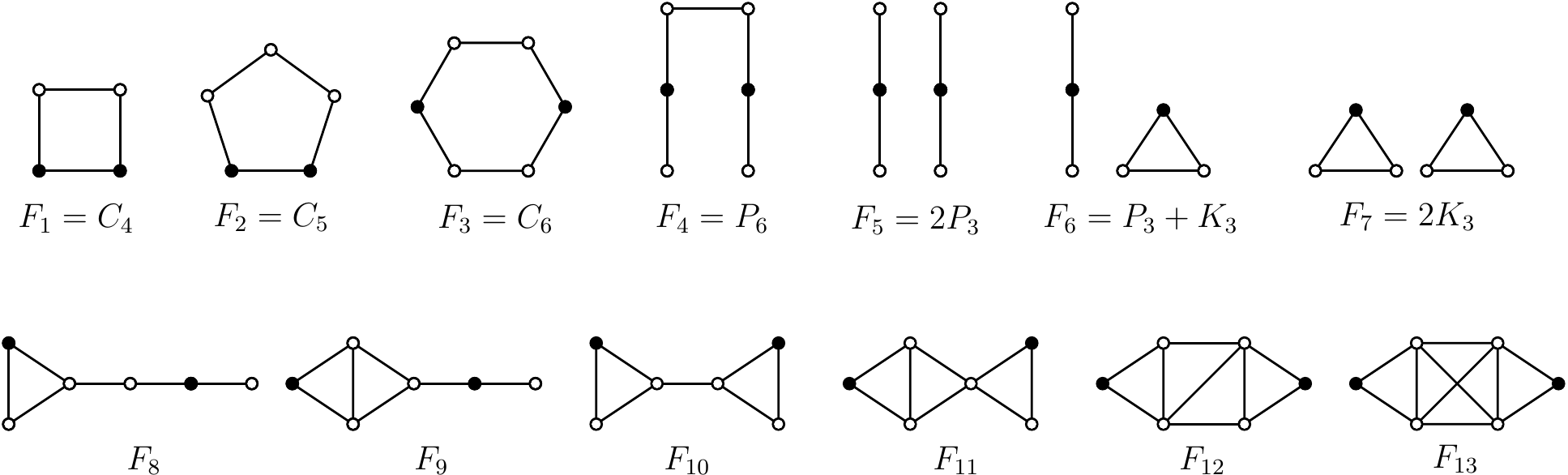}
  \end{center}
  \caption{Graphs $F_1,\ldots, F_{13}$.}
\label{fig:forbidden-induced-subgraphs}
\end{figure}

\begin{proof}
The implication $(\ref{item1})\Rightarrow (\ref{item2})$ follows from Proposition~\ref{lem:reduction} and Theorem~\ref{thm:BF:characterization}.

For the implication $(\ref{item2})\Rightarrow (\ref{item3})$, we only need to verify that none of the graphs $F_1,\ldots, F_{13}$ is $2$-asummable.
Arguing by contradiction, assume that there is a graph $F\in \{F_1,\ldots, F_{13}\}$ that is $2$-asummable.
Take two vertices of degree $2$ in $F$, say $u$ and $v$, such that $u$ and $v$ have disjoint neighborhoods (e.g., the two black vertices in the depiction of $F$ in Fig.~\ref{fig:forbidden-induced-subgraphs}). Denote their respective neighbors by $a,b$ and $c,d$. (If $F\in \{F_1,F_2\}$, then let $c = u$ and $b = v$). It is easy to see that the union of the two neighborhoods $N(u)\cup N(v)$ can be partitioned into two disjoint sets, namely $Y=\{a,c\}$ and $Z=\{b,d\}$, none of which contains the neighborhood of another vertex in the graph.
For a set $S\subseteq V(F)$, let $x^S\in \{0,1\}^{V(F)}$ denote the characteristic vector of $S$, that is,
$$x^S_i = \left\{
           \begin{array}{ll}
             1, & \hbox{if $i\in S$;} \\
             0, & \hbox{otherwise.}
           \end{array}
         \right.$$
Clearly, $x^{N(u)}$ and $x^{N(v)}$ are true points of $f_F$, and $x^Y$ and $x^Z$ are false points of $f_F$.
Since $x^{Y}+x^{Z} = x^{N(u)}+x^{N(v)}$, the neighborhood function $f_F$ is $2$-summable,
a contradiction.

It remains to show the implication $(\ref{item3})\Rightarrow (\ref{item1})$.
Since the class of
$\{F_1,\ldots, F_{13}\}$-free graphs is hereditary, it is enough to show that every
$\{F_1,\ldots, F_{13}\}$-free graph is total domishold.
Let $G$ be an $\{F_1,\ldots, F_{13}\}$-free graph.
By Theorem~\ref{prop:equivalences}, $G$ is total domishold if and only if
its
reduced neighborhood hypergraph ${\cal RN}(G)$ is threshold.
We will prove that ${\cal RN}(G)$ is dually Sperner, and the conclusion will follow from Lemma~\ref{lem:dually-Sperner-threshold}.

Suppose for a contradiction that the reduced neighborhood hypergraph ${\cal RN}(G)$ is not dually Sperner. Then, there exist two hyperedges $e$ and $f$ of
${\cal RN}(G)$ such that $\min\{|e\setminus f|,|f\setminus e|\}\ge 2$.
By the definition of ${\cal RN}(G)$, there exist two vertices $u$ and $v$ of $G$ such that
$e = N_G(u)$ and $f = N_G(v)$.
Moreover, the following condition holds:
\begin{equation}\label{neighborhoods}
\textrm{For $w\in \{u,v\}$, if $x\in V(G)\setminus \{w\}$ then $N_G(x)\not\subset N_G(w)$.}
 \end{equation}

We analyze two cases, depending on whether $u$ and $v$ are adjacent or not.

\medskip

{\it Case 1: $uv\not\in E(G)$.}
Let $a$ and $b$ be two distinct elements in $e\setminus f= N_G(u)\setminus N_G(v)$,  and let $c$ and $d$ be two distinct elements in $f\setminus e = N_G(v)\setminus N_G(u)$. Clearly, vertices $u,v,a,b,c,d$ are all pairwise distinct. Moreover,
$ua,ub,vc,vd\in E(G)$ while $uv,uc,ud,va,vb\not\in E(G)$. Let $k$ be the number of edges in $G$ connecting a vertex in $\{u,a,b\}$ with a vertex of $\{v,c,d\}$. Notice that $k\in \{0,1,2,3,4\}$.

Suppose that $k = 0$. Then, depending on the number of edges in the set $\{ab,cd\}\cap E(G)$, the subgraph of $G$ induced by $\{u,v,a,b,c,d\}$ is isomorphic to either $F_5 = 2P_3$, $F_6 = P_3+K_3$, or $F_7 = 2K_3$, a contradiction.

Suppose that $k = 1$. Then, depending on the number of edges in the set $\{ab,cd\}\cap E(G)$, the subgraph of $G$ induced by $\{u,v,a,b,c,d\}$ is isomorphic to either $F_4 = P_6$, $F_8$, or $F_{10}$, a contradiction.

Suppose that $k = 2$. We may assume without loss of generality that $ac\in E(G)$ and $ad\not\in E(G)$.
Suppose first that $bd\in E(G)$. Then $bc\not\in E(G)$, and depending on the number of edges in the set $\{ab,cd\}\cap E(G)$, the subgraph of $G$ induced by $\{u,v,a,b,c,d\}$ contains either an induced $F_1 = C_4$, $F_2 = C_5$, or $F_3 = C_6$, a contradiction.
Suppose now that $bd\not\in E(G)$. Then $bc\in E(G)$.
Since $G$ is $C_4$-free, $ab\in E(G)$,
 and depending on whether $cd\in E(G)$ or not, the subgraph of $G$ induced by $\{u,v,a,b,c,d\}$ is isomorphic to either $F_{11}$ or $F_9$, a contradiction.

Suppose that $k = 3$. We may assume without loss of generality that $ac, bc, bd\in E(G)$ and $ad\not\in E(G)$.
Since $G$ is $C_4$-free, $ab\in E(G)$ and $cd\in E(G)$. But now, the subgraph of $G$ induced by $\{u,v,a,b,c,d\}$ is isomorphic to $F_{12}$,
a contradiction.

Finally, suppose that $k = 4$. Since $G$ is $C_4$-free, $ab\in E(G)$ and $cd\in E(G)$. But now, the subgraph of $G$ induced by $\{u,v,a,b,c,d\}$ is isomorphic to $F_{13}$, a contradiction.

\medskip
{\it Case 2: $uv\in E(G)$.} Let $a=v$ and $b$ be two distinct elements in $e\setminus f= N_G(u)\setminus N_G(v)$,  and let $c=u$ and $d$ be two distinct elements in $f\setminus e = N_G(v)\setminus N_G(u)$. Since $G$ is $C_4$-free, $bd\not\in E(G)$, and the vertices $\{b,u,v,d\}$ induce a $P_4$ in $G$.

Applying condition~\eqref{neighborhoods} to $w = u$ and $x = d$ implies that $N_G(d)\not\subset N_G(u)$. Hence, there exists a vertex, say $d'$, such that $d'd\in E(G)$ but $d'u\notin E(G)$. Clearly, $d'\not\in \{u,v,b,d\}$.
We claim that $d'$ is not adjacent to $b$. Indeed, if $d'b\in E(G)$, then $G$ contains either an induced $C_4$ (if $d'v\in E(G)$)
or an induced $C_5$ (otherwise), a contradiction.

A symmetric argument shows that $G$ contains a vertex $b'\in V(G)\setminus \{u,v,b,d\}$ that is adjacent to $b$ and non-adjacent to both  $v$ and $d$.
Clearly, $b'\neq d'$.
If $b'd'\in E(G)$, then
depending on the number of edges in the set $\{ub',vd'\}\cap E(G)$, the subgraph of $G$ induced by $\{u,v,a,b,c,d\}$ contains either an induced $F_1 = C_4$, $F_2 = C_5$, or $F_3 = C_6$, a contradiction. If $b'd'\not \in E(G)$, then
depending on the number of edges in the set $\{ub',vd'\}\cap E(G)$, the subgraph of $G$ induced by $\{u,v,a,b,c,d\}$ is isomorphic to either $F_4 = P_6$, $F_8$, or $F_{10}$, a contradiction.
\end{proof}

Theorem~\ref{prop:forbidden-induced-subgraphs} implies a nice structural feature of HTD graphs.
Recall that a graph is said to be {\em $(1,2)$-polar} if it admits a partition of its vertex set into two (possibly empty) parts $K$ and $L$ such that
$K$ is a clique and $L$ induces a subgraph of maximum degree at most $1$.
The following result is an immediate consequence of Theorem~\ref{prop:forbidden-induced-subgraphs} and the forbidden induced subgraph characterization
of $(1,2)$-polar graphs due to Gagarin and Metelski\u{i}~\cite{GM99}.

\begin{corollary}\label{cor:polar-chordal}
Every HTD graph is a $(1,2)$-polar chordal graph.
\end{corollary}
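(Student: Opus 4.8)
The plan is to verify the two asserted properties—chordality and $(1,2)$-polarity—separately, in each case exploiting the forbidden induced subgraph characterization of HTD graphs supplied by Theorem~\ref{prop:forbidden-induced-subgraphs}, namely that such a graph $G$ is $\{F_1,\ldots,F_{13}\}$-free.

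First I would show that $G$ is chordal. Reading off the identifications made in the proof of Theorem~\ref{prop:forbidden-induced-subgraphs}, we have $F_1 = C_4$, $F_2 = C_5$, $F_3 = C_6$, and $F_4 = P_6$. Hence $G$ contains no induced cycle of length $4$, $5$, or $6$. For a (hypothetical) induced cycle of length $n\ge 7$, any six consecutive vertices along the cycle induce a copy of $P_6 = F_4$: since the cycle is induced, these six vertices span exactly the five consecutive edges and no chords. Thus an $F_4$-free graph cannot contain such a long induced cycle either. Combining these observations, $G$ has no induced cycle of length at least $4$, so $G$ is chordal.

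Next, for $(1,2)$-polarity, I would invoke the forbidden induced subgraph characterization of $(1,2)$-polar graphs due to Gagarin and Metelski\u{i}~\cite{GM99}. The aim is to show that every minimal obstruction to being $(1,2)$-polar is already excluded from $G$. Concretely, I would traverse their obstruction list and argue that each member either fails to be chordal—and hence, by the first part, cannot occur as an induced subgraph of $G$—or else is isomorphic to, or contains as an induced subgraph, one of the graphs $F_1,\ldots,F_{13}$, and is therefore forbidden in $G$ by Theorem~\ref{prop:forbidden-induced-subgraphs}. Since no obstruction can appear, $G$ is $(1,2)$-polar; together with chordality this yields the claim.

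I expect the second part to be the only real work: one must match the explicit obstruction list of~\cite{GM99} against the $F_i$, up to the chordality reduction. This is a finite, mechanical comparison once the list is in hand, but it requires some care in recording which of the $F_i$ are chordal—$F_1$, $F_2$, $F_3$ are the three induced cycles $C_4$, $C_5$, $C_6$ and are not, while $F_4$ and the remaining graphs $F_5,\ldots,F_{13}$ are chordal—and in recognizing each Gagarin--Metelski\u{i} obstruction as containing a suitable $F_i$. By contrast, the chordality argument is immediate from the identifications $F_1=C_4$, $F_2=C_5$, $F_3=C_6$, $F_4=P_6$.
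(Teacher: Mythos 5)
Your proposal is correct and follows essentially the same route as the paper: the paper likewise derives chordality from $F_1=C_4$, $F_2=C_5$, $F_3=C_6$ and the fact that $F_4=P_6$ sits inside every induced cycle of length at least $7$, and then matches the Gagarin--Metelski\u{i} obstruction list against the $F_i$. The only thing you leave pending---the ``mechanical comparison''---is exactly what the paper records: among the $18$ obstructions only three are chordal, namely $2P_3$, $P_3+K_3$, and $2K_3$, which are precisely $F_5$, $F_6$, $F_7$, so the claim follows from $\{F_1,\ldots,F_7\}$-freeness alone.
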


\begin{proof}
In~\cite{GM99}, Gagarin and Metelski\u{i} characterized the set of $(1,2)$-polar graphs with a set of $18$ forbidden induced subgraphs
(see also~\cite{ZZ06}). Only three graphs in this list are chordal: $2P_3$, $P_3+K_3$, and $2K_3$ (that is, the graphs
$F_5, F_6$, and $F_7$ from Fig.~\ref{fig:forbidden-induced-subgraphs}).
This implies that the class of $(1,2)$-polar chordal graphs is exactly the class of
$\{F_5,F_6,F_7,C_4,C_5,C_6,\ldots\}$-free graphs.
Notice that $C_4 = F_1$, $C_5 = F_2$, $C_6 = F_3$, and $F_4 = P_6$ is an induced subgraph of every cycle of order at least $7$.
Therefore, the class of $\{F_1,\ldots, F_{7}\}$-free graphs is a subclass of the class of $(1,2)$-free polar chordal graphs.
In particular, by Theorem~\ref{prop:forbidden-induced-subgraphs}, the same is true for HTD graphs.
\end{proof}

Notice that the converse of Corollary~\ref{cor:polar-chordal} does not hold: graphs $F_8,F_9,\ldots, F_{13}$ are $(1,2)$-polar chordal graphs that are
not TD.

\section{Algorithmic Aspects of TD Graphs}\label{sec:algorithmic-aspects}

As shown in Theorem~\ref{prop:forbidden-induced-subgraphs}, the class of hereditary total domishold graph is characterized by finitely many forbidden induced subgraphs. Therefore, HTD graphs can be recognized in polynomial time.
The next theorem and its proof establish the existence of a polynomial time recognition algorithm also for total domishold graphs,
reducing the problem to the problem of recognizing threshold positive Boolean functions given by a their complete DNF.

\begin{theorem}\label{thm:poly-time-recognition}
There exists a polynomial time algorithm for recognizing total domishold graphs. If the input graph $G$ is total domishold, the algorithm also computes an integral total domishold structure of $G$.
\end{theorem}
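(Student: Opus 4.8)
The plan is to reduce the recognition of total domishold graphs to the recognition of threshold positive Boolean functions, invoking the machinery already assembled in Section~\ref{sec:TD-graphs-BFs-hypergraphs}. By Proposition~\ref{lem:reduction} (equivalently, by the equivalence of $(i)$ and $(ii)$ in Theorem~\ref{prop:equivalences}), a graph $G$ is total domishold if and only if its neighborhood function $f_G$ is threshold. Theorem~\ref{thm:threshold-Bfs} of Peled and Simeone provides a polynomial time algorithm that, given the \emph{complete} DNF of a positive Boolean function, decides thresholdness and, in the affirmative case, outputs an integral separating structure. So the entire algorithm consists of three phases: (1) from the input graph $G$, construct a complete DNF for $f_G$; (2) run the Peled--Simeone algorithm on it; (3) if $f_G$ is threshold, translate the returned integral separating structure $(w_1,\ldots,w_n,t)$ of $f_G$ into an integral total domishold structure of $G$ via the explicit formula in Proposition~\ref{lem:reduction}, namely $(w;\sum_{i=1}^n w_i - t)$ with $w(v_i)=w_i$.

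\textbf{First I would} address phase~(1), which is where the real content lies. The function $f_G$ is defined by the DNF $\bigvee_{v\in V}\bigwedge_{u\in N(v)}x_u$, but this is \emph{not} its complete DNF: the neighborhood terms need not be the prime implicants, since one neighborhood may contain another. To obtain the complete DNF one keeps only the \emph{minimal} neighborhoods under inclusion; concretely, the prime implicants of $f_G$ correspond exactly to the edge set of the reduced neighborhood hypergraph ${\cal RN}(G)$ (this is precisely the equivalence of $(ii)$, $(iii)$, $(iv)$, $(v)$ in Theorem~\ref{prop:equivalences}). Thus I would compute, for each vertex $v$, its neighborhood $N_G(v)$, and then discard every $N_G(v)$ that strictly contains some $N_G(u)$. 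This pairwise comparison over the $n$ neighborhoods takes $O(n^3)$ time, and the surviving sets form the terms of the complete DNF of $f_G$. Since there are at most $n$ such terms, each of size at most $n$, the DNF has polynomial size in the input.

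\textbf{The main obstacle} to watch for is the justification that the resulting DNF really is \emph{complete}, i.e.\ that after discarding the non-minimal neighborhoods the remaining terms are exactly the prime implicants of $f_G$. A positive Boolean function written as an irredundant disjunction of its inclusion-minimal positive terms has those terms as its prime implicants, so removing every strictly-dominated neighborhood and deleting duplicates yields the complete DNF; this is the combinatorial observation underlying Theorem~\ref{prop:equivalences}, and it is what licenses feeding the DNF into Theorem~\ref{thm:threshold-Bfs}. One subtle point is the degenerate case of an isolated vertex $v$, where $N(v)=\emptyset$ makes $\bigwedge_{u\in N(v)}x_u$ the empty conjunction, i.e.\ the constant $1$; then $f_G\equiv 1$, $G$ is total domishold by Proposition~\ref{prop:isolated-vertex}, and the algorithm can report this directly without invoking the DNF machinery.

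\textbf{Finally,} combining the three phases, the total running time is polynomial: phase~(1) is $O(n^3)$ plus the cost of writing down a DNF of size $O(n^2)$, phase~(2) is polynomial by Theorem~\ref{thm:threshold-Bfs}, and phase~(3) is a trivial arithmetic transformation of the output weights. Correctness in both directions is inherited verbatim from Proposition~\ref{lem:reduction}: $G$ is total domishold exactly when $f_G$ is threshold, and the weight translation it specifies turns the integral separating structure of $f_G$ into an integral total domishold structure of $G$. This establishes both the decision and the construction claims of the theorem.
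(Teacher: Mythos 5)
Your proposal is correct and follows essentially the same route as the paper's proof: compute the complete DNF of $f_G$ by retaining only the inclusion-minimal neighborhoods, feed it to the Peled--Simeone algorithm of Theorem~\ref{thm:threshold-Bfs}, and convert the returned integral separating structure into a total domishold structure via the formula in Proposition~\ref{lem:reduction}. Your extra care about completeness of the DNF and the isolated-vertex degenerate case only adds detail the paper leaves implicit.
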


\begin{proof}
Theorem~\ref{thm:threshold-Bfs} and Proposition~\ref{lem:reduction} imply that the following polynomial time
algorithm correctly determines whether $G$ is total domishold, and if this is the case,
computes a total domishold structure of it.
First, compute the complete DNF $\phi$ of the neighborhood function $f_G$ of $G$.
More specifically, let
$\phi = \bigvee_{S\in {\cal N}} \bigwedge_{u\in S}x_u$
where ${\cal N}$ is the set of neighborhoods of vertices of $G$ that do not properly contain any other neighborhood.
Second, apply the algorithm given by Theorem~\ref{thm:threshold-Bfs} to the input $\phi$.
If the algorithm detects that $f_G$ is not threshold, then $G$ is not total domishold. Otherwise, the algorithm will compute
an integral separating structure
$(w_1,\ldots, w_n,t)$  of $f_G$, in which case Proposition~\ref{lem:reduction} implies that
$(w;\sum_{i = 1}^nw_i-t)$ with $w(v_i) = w_i$ for all $i\in [n]$
is a total domishold structure of $G$.
\end{proof}

In view of Proposition~\ref{prop:relation}, Theorem~\ref{thm:threshold-Bfs} also implies the existence of a polynomial time algorithm for recognizing threshold hypergraphs. Recall that the known algorithm is based on dualization and linear programming; the existence of a {purely combinatorial} polynomial time algorithm for recognizing threshold Boolean functions/hypergraphs is an open problem~\cite{CH11}. In view of the reduction from the proof of Theorem~\ref{thm:poly-time-recognition}, it is a natural question whether there exists a purely combinatorial polynomial time algorithm for recognizing total domishold graphs.
Proposition~\ref{prop:hypergraph-to-graph} and Corollary~\ref{cor:graph-to-hypergraph} imply the following:
\begin{proposition}
There exists a purely combinatorial polynomial time algorithm for recognizing total domishold split graphs if and only if
there exists a purely combinatorial polynomial time algorithm for recognizing threshold hypergraphs (equivalently: for recognizing threshold positive Boolean functions given by their complete DNF).
\end{proposition}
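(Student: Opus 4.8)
The plan is to prove the equivalence by exhibiting a polynomial-time reduction in each direction, built directly on Proposition~\ref{prop:hypergraph-to-graph} and Corollary~\ref{cor:graph-to-hypergraph}. Each reduction will be a purely combinatorial construction (one passing from a hypergraph to its split-incidence graph, the other from a split graph to its $I$-neighborhood hypergraph), so that pre-composing a purely combinatorial polynomial-time recognition algorithm with such a reduction again yields a purely combinatorial polynomial-time algorithm. The equivalence with recognizing threshold positive Boolean functions given by their complete DNF is then immediate from Proposition~\ref{prop:relation}.

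First I would treat the forward implication. Assuming a purely combinatorial polynomial-time algorithm ${\cal A}$ that recognizes total domishold split graphs, I would, given an arbitrary hypergraph ${\cal H} = ({\cal V},{\cal E})$, construct its split-incidence graph $G = {\it SI}({\cal H})$. This graph has $|{\cal V}| + |{\cal E}|$ vertices, is split by construction, and is obtained from the incidence data of ${\cal H}$ alone, so the construction is polynomial and involves no linear programming. By Proposition~\ref{prop:hypergraph-to-graph}, ${\cal H}$ is threshold if and only if $G$ is total domishold, so running ${\cal A}$ on $G$ decides thresholdness of ${\cal H}$.

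Conversely, assuming a purely combinatorial polynomial-time algorithm ${\cal B}$ that recognizes threshold hypergraphs, I would, given a split graph $G$, first compute a split partition $(K,I)$ in polynomial time (split graphs are recognizable in linear time, with a split partition produced as a by-product), and then form the $I$-neighborhood hypergraph ${\cal IN}(G) = (K,\{N_G(v)\mid v\in I\})$, which again is a direct, polynomial-size combinatorial construction. By Corollary~\ref{cor:graph-to-hypergraph}, $G$ is total domishold if and only if ${\cal IN}(G)$ is threshold, and since this equivalence holds for every split partition the particular choice of $(K,I)$ is irrelevant; hence running ${\cal B}$ on ${\cal IN}(G)$ decides whether $G$ is total domishold.

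Since the whole argument reduces to two transparent reductions, there is no genuine mathematical obstacle; the only point demanding care --- and hence the crux of the write-up --- is verifying that both reductions are faithful to the notion of a purely combinatorial algorithm. Because ${\it SI}$ and ${\cal IN}$ are each defined purely by incidence relations and perform no numerical optimization, the composite procedures inherit the purely combinatorial and polynomial-time character of ${\cal A}$ and ${\cal B}$ respectively, completing the equivalence.
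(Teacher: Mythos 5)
Your proof is correct and follows exactly the paper's intended argument: the paper states this proposition as an immediate consequence of Proposition~\ref{prop:hypergraph-to-graph} and Corollary~\ref{cor:graph-to-hypergraph}, which is precisely the pair of combinatorial reductions (via ${\it SI}$ and ${\cal IN}$) that you spell out. Your additional care about the choice of split partition and the purely combinatorial nature of the reductions is a faithful elaboration of what the paper leaves implicit.
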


Let us now examine some consequences of Theorem~\ref{thm:poly-time-recognition}. The {\it total domination number} $\gamma_t(G)$ of a graph $G$ without isolated vertices is the minimum size of a total dominating set in $G$, and the {\it total dominating set problem} is the problem of computing the total domination number of a given graph without isolated vertices.
The problem is \NP-hard in general, and remains hard even for restricted graph classes such as bipartite graphs or split graphs~\cite{CS90}. On the positive side, polynomial time algorithms have been designed for several graph classes~\cite{Kratsch,Henning-Yeo}. With the exception of dually chordal graphs~\cite{BCD} and DDP-graphs~\cite{Henning-Yeo}, all known polynomial time algorithms for the total dominating set problem we are aware of deal with hereditary graph classes. The following result provides another example of a non-hereditary graph class for which the problem is polynomial.

\begin{proposition}\label{prop:MTD-problem-polynomial}
The total dominating set problem is solvable in polynomial time for total domishold graphs.
\end{proposition}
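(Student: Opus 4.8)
The plan is to reduce the computation of the total domination number to an elementary cardinality-constrained selection problem, exploiting the fact that, for a total domishold graph, membership in the family of total dominating sets is governed entirely by a single linear threshold. Given a total domishold graph $G=(V,E)$ without isolated vertices, I would first invoke Theorem~\ref{thm:poly-time-recognition} to compute, in polynomial time, an integral total domishold structure $(w,t)$ of $G$. By the definition of such a structure, a subset $S\subseteq V$ is a total dominating set of $G$ if and only if $w(S)\ge t$. Hence the quantity to be computed satisfies
$$\gamma_t(G)=\min\{|S|\mid S\subseteq V,\ w(S)\ge t\}\,,$$
so the graph-theoretic optimization is now a purely numerical one: find a subset of minimum cardinality whose total weight reaches the threshold $t$.

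The key observation is that this minimum is attained greedily by the heaviest vertices. I would sort the vertices as $u_1,\ldots,u_n$ with $w(u_1)\ge w(u_2)\ge\cdots\ge w(u_n)$ and let $k$ be the least index with $\sum_{i=1}^{k}w(u_i)\ge t$. Such a $k$ exists because $G$ has no isolated vertices, so $V$ is itself a total dominating set and therefore $w(V)\ge t$. The set $\{u_1,\ldots,u_k\}$ is then a minimum total dominating set: it is a total dominating set by the threshold condition, while any set $S$ with $|S|\le k-1$ has $w(S)\le\sum_{i=1}^{k-1}w(u_i)<t$ and thus fails to be a total dominating set. Consequently $\gamma_t(G)=k$, and the greedy set exhibits an optimal solution.

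It remains to note that the whole procedure runs in polynomial time. Computing the structure $(w,t)$ is polynomial by Theorem~\ref{thm:poly-time-recognition}, and the greedy phase consists only of sorting the weights and accumulating partial sums. Here it is worth stressing that, unlike the generic $O(nM)$ dynamic program mentioned in the introduction whose running time depends on an upper bound $M$ on the threshold, the greedy argument is insensitive to the magnitude of the weights: the separating structure returned by Theorem~\ref{thm:poly-time-recognition} has polynomial bit-length, so the comparisons and additions of weights can all be performed in polynomial time even if the weights themselves are exponentially large. In this sense the proof has essentially no obstacle of its own; all of the difficulty is absorbed into the computation of the total domishold structure, after which the total dominating set problem degenerates into a one-line greedy selection.
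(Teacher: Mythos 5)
Your proposal is correct and follows essentially the same route as the paper: invoke Theorem~\ref{thm:poly-time-recognition} to obtain an integral total domishold structure $(w,t)$, then greedily select the heaviest vertices until the threshold is reached. The paper leaves the correctness of the greedy step as ``immediate,'' whereas you spell out the exchange-free argument that any set of fewer vertices has weight below $t$; this is a faithful elaboration, not a different proof.
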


\begin{proof}
Applying Theorem~\ref{thm:poly-time-recognition}, we may assume that the input graph $G$ is given together with an integral total domishold structure $(w,t)$.
The following greedy approach can now be used to find a minimum-sized total dominating set $S$:
\begin{enumerate}
  \item Sort the vertices according to their weights so that $w(v_1)\ge w(v_2)\ge \cdots \ge w(v_n)$.
  \item Start with the empty set, $S=\emptyset$, and, as long as $w(S)<t$, keep adding to $S$ vertices in the above order $(v_1,\ldots, v_n)$.
 \item Return $S$.
\end{enumerate}
The correctness and the polynomial running time of this algorithm are immediate.
\end{proof}

While the total dominating set problem is \NP-hard for chordal graphs (in fact, even for split graphs~\cite{CS90}),
Proposition~\ref{prop:MTD-problem-polynomial} shows that the problem is polynomial in the class of HTD graphs, which, by
Corollary~\ref{cor:polar-chordal}, is a subclass of chordal graphs.

We now examine some consequences of Proposition~\ref{prop:MTD-problem-polynomial} for the dominating set problem.
The {\it domination number} $\gamma(G)$ of a given graph $G$ is the minimum size of a dominating set in $G$.
The {\it dominating set problem} is the problem of computing the domination number of a given graph.
For general graphs, it is $\NP$-hard to compute the domination number~\cite{GJ79}.
Moreover, for every $\epsilon >0$, there is no polynomial time algorithm approximating
the domination number for graphs without isolated vertices within a factor of \hbox{$(1-\epsilon)\ln n$}, unless
$\NP\subseteq {\textrm{DTIME}}(n^{O(\log\log n)})$~\cite{Feige-98,CC04}, and it is $\NP$-hard to approximate the domination number
within a factor of \hbox{$0.2267\ln n$} (see~\cite{CMV13}).

\begin{proposition}\label{prop:2-approx}
There exists a $2$-approximation algorithm for the dominating set problem in the class of total domishold graphs.
\end{proposition}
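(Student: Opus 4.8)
The plan is to exploit the tight relationship between the domination number and the total domination number on graphs without isolated vertices, combined with the exact polynomial-time computation of $\gamma_t$ furnished by Proposition~\ref{prop:MTD-problem-polynomial}. The two ingredients I would use are the classical sandwich inequalities $\gamma(G)\le \gamma_t(G)\le 2\gamma(G)$, valid for every graph $G$ without isolated vertices. The left inequality is immediate, since every total dominating set is in particular a dominating set: each vertex has a neighbor in the set and is therefore dominated. For the right inequality I would start from a minimum dominating set $D$ and make it total by adding, for each vertex $v\in D$ that has no neighbor inside $D$, an arbitrary neighbor of $v$ (one exists because $G$ has no isolated vertices). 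The resulting set is a total dominating set of size at most $2|D|=2\gamma(G)$, so $\gamma_t(G)\le 2\gamma(G)$.

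The algorithm itself is then short. Given a total domishold graph $G$ without isolated vertices, I would invoke Proposition~\ref{prop:MTD-problem-polynomial} to compute, in polynomial time, a minimum total dominating set $S$, so that $|S|=\gamma_t(G)$, and output $S$ as the approximate solution to the dominating set problem. Since $S$ is total dominating it is also dominating, and by the sandwich inequalities $\gamma(G)\le |S|=\gamma_t(G)\le 2\gamma(G)$. Thus $S$ is a dominating set whose size is within a factor $2$ of the optimum, which is exactly what a $2$-approximation algorithm is required to produce.

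The one point that requires care---and the main obstacle---is the treatment of isolated vertices, for which $\gamma_t$ is undefined and Proposition~\ref{prop:MTD-problem-polynomial} does not apply. Simply deleting the isolated vertices does not circumvent the difficulty, because the class of total domishold graphs is not hereditary: for instance $C_4+K_1$ is total domishold by Proposition~\ref{prop:isolated-vertex}, yet removing its isolated vertex leaves $C_4$, which is not total domishold. Hence the total-domination handle is genuinely unavailable once isolated vertices are present, and I would therefore state and carry out the argument in the natural setting of graphs without isolated vertices---the same setting in which the total dominating set problem and the quoted inapproximability results for domination are formulated.
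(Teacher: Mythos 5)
Your core argument coincides with the paper's: both rest on the sandwich inequalities $\gamma(H)\le\gamma_t(H)\le 2\gamma(H)$ for graphs $H$ without isolated vertices (proved there exactly as you prove them), and both output a minimum total dominating set obtained via Proposition~\ref{prop:MTD-problem-polynomial}, noting that it is a dominating set of size at most twice the optimum. The difference lies entirely in the treatment of isolated vertices, and there the comparison cuts the other way than usual: the paper's proof performs precisely the step you rejected. It lets $S$ be the set of isolated vertices of the input graph $G$, forms $G'=G-S$, invokes Proposition~\ref{prop:MTD-problem-polynomial} to compute a minimum total dominating set $D$ of $G'$, and returns $D\cup S$. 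As you point out, that invocation is unjustified: Proposition~\ref{prop:MTD-problem-polynomial} requires $G'$ to be total domishold, and since total domisholdness is not hereditary this can fail; your example $C_4+K_1$ (total domishold by Proposition~\ref{prop:isolated-vertex}, while $C_4$ is not total domishold) is exactly a witness. So the paper's proof of the unrestricted statement has a gap at this step, whereas your proof of the restricted statement is complete.

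Moreover, your instinct to retreat to graphs without isolated vertices is not merely prudent; it is forced. For an arbitrary non-empty graph $H$ without isolated vertices, the graph $G=H+K_1$ is total domishold by Proposition~\ref{prop:isolated-vertex}, the isolated vertex lies in every dominating set of $G$ and dominates nothing else, and hence $\gamma(G)=\gamma(H)+1$. A polynomial-time $2$-approximation algorithm valid on all total domishold graphs would therefore yield, for every such $H$, a dominating set of $H$ of size at most $2\gamma(G)-1=2\gamma(H)+1\le 3\gamma(H)$, i.e., a constant-factor approximation for domination on general graphs; this contradicts the logarithmic inapproximability bounds~\cite{Feige-98,CC04,CMV13} quoted immediately before the proposition, under the corresponding complexity hypotheses. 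So the proposition can only hold in the form you proved it, with isolated vertices excluded, and your write-up establishes that version correctly.
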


\begin{proof}
Given a TD graph $G$, let $S$ be the set of isolated vertices in $G$, and let $G' = G-S$.
We may assume that $G$ has at least one edge (otherwise we have $\gamma(G) = |V(G)|$),
and hence $G'$ is non-empty graph with a total dominating set.
By Proposition~\ref{prop:MTD-problem-polynomial},
we can compute a minimum total dominating set $D$ in $G'$ in polynomial time.
Let $D' = D\cup S$.

Clearly, $D'$ is a dominating set in $G$.
On the other hand, denoting by $D^*$ a minimum dominating set in $G$,
we see that $D^*-S$ is a minimum dominating set in the graph $G'$.

It is easy to see that for every graph $H$ without isolated vertices,
its domination and total domination numbers are related as follows:
$\gamma(H)\le \gamma_t(H)\le 2\gamma(H)$.
In particular,
$\gamma_t(G')\le 2\gamma(G')$, which  implies
$|D| = \gamma_t(G')\le 2\gamma(G') = 2|D^*-S|$
and consequently
$$|D'| = |D\cup S| = |D|+|S| \le 2|D^*-S|+|S|\le 2|D^*|\,.$$
Therefore, the algorithm that computes and outputs $D'$ is a $2$-approximation algorithm for the domination problem on total domishold graphs.
\end{proof}

We leave it as an open problem to determine the exact complexity status of the dominating set problem in the classes of TD and HTD graphs.

\begin{remark}
Simple reductions from general graphs show that the problems of computing the independence number, the clique number and the chromatic number of a given TD graph are \NP-hard, and also $\NP$-hard to approximate to within a factor of $n^{1-\epsilon}$. (See Appendix~\ref{hardness}.)
On the other hand, since HTD graphs are chordal, all these problems are polynomial for HTD graphs.
\end{remark}

\section{Conclusion}\label{sec:conclusion}

In conclusion, we mention some open problems related to total domishold graphs.

\begin{problem}
Is there a purely combinatorial polynomial time algorithm for recognizing total domishold graphs?
For recognizing split total domishold graphs?
\end{problem}
An answer to either of these questions would resolve the open problem
regarding the existence of a {purely combinatorial} polynomial time algorithm for recognizing threshold positive Boolean functions given by a complete DNF~\cite{CH11}.

As showed in this paper, the class of HTD graphs forms a proper subclass of $(1,2)$-polar chordal graphs and a proper superclass of the complements of domishold graphs.
A more detailed structural analysis of HTD graphs seems to
deserve further attention. For instance, Theorem~\ref{prop:forbidden-induced-subgraphs} implies that a split graph $G$ is HTD if and only if it is $F_{13}$-free. Moreover, in~a companion paper~\cite{CM13} we prove that by
replacing in the list of forbidden induced subgraphs of HTD graphs the graphs $F_{11}$ and $F_{12}$
from Fig.~\ref{fig:forbidden-induced-subgraphs} with two of their respective induced subgraphs on $5$ vertices,
we obtain a class of graphs every connected member of which arises from a threshold graph by
attaching an arbitrary (non-negative) number of leaves to each of its vertices.
In particular, this structure implies a linear time recognition algorithm for graphs in this class.

Further insight into the structure of general HTD graphs might imply the existence of a more
efficient recognition algorithm of HTD graphs
than the
one with running time $O(|V(G)|^6)$ implied by Theorem~\ref{prop:forbidden-induced-subgraphs}.

\begin{problem}
Is there a linear time algorithm for recognizing hereditary total domishold graphs?
\end{problem}

We conclude with a question motivated by the existence of a
$2$-approximation algorithm for the dominating set problem in the class of total domishold graphs (Proposition~\ref{prop:2-approx}).

\begin{problem}
Determine the computational complexity of the dominating set problem on total domishold  / hereditary total domishold graphs.
\end{problem}

\subsection*{Acknowledgements}

We would like to thank Endre Boros for stimulating discussions and helpful comments.

\bibliographystyle{abbrv}

\appendix

\section{Hardness Proofs}\label{hardness}

Here we provide the hardness proofs establishing the computational complexity of computing the independence, the clique and the chromatic numbers of a given total domishold graph. Recall that the independence number of a graph $G$ is denoted by $\alpha(G)$ and equals the maximum size of an independent set in $G$, the clique number of a graph $G$ is denoted by $\omega(G)$ and equals the maximum size of a clique in $G$, and the chromatic number of a graph $G$ is denoted by $\chi(G)$ and equals the minimum integer $k$ such that $V(G)$ can be partitioned into $k$ independent sets.
For general graphs, all these three parameters are $\NP$-hard to compute, and also $\NP$-hard to approximate to within a factor of $n^{1-\epsilon}$, for every $\epsilon >0$~\cite{Zuckerman}.

\begin{proposition}
For every $\epsilon >0$, it is $\NP$-hard to approximate the independence number of a given TD graph to within a factor of $n^{1-\epsilon}$.
\end{proposition}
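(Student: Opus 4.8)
The plan is to reduce from the independence number problem on general graphs, exploiting the fact that the class of TD graphs is extremely rich: by Proposition~\ref{prop:isolated-vertex}, \emph{every} graph with an isolated vertex is TD. Consequently, the trivial operation of adjoining an isolated vertex turns an arbitrary graph into a TD graph while perturbing both the independence number and the vertex count by only an additive constant. Concretely, given an arbitrary graph $G$ on $n$ vertices, I would form $G' = G + K_1$, the disjoint union of $G$ with a single vertex $u$. By Proposition~\ref{prop:isolated-vertex}, $G'$ is TD; it has $n' = n+1$ vertices, the reduction is computable in linear time, and since an isolated vertex lies in every maximal independent set we have $\alpha(G') = \alpha(G)+1$. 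Thus the reduction is a (nearly) approximation-preserving map from general graphs into TD graphs.

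Next I would invoke the inapproximability of the independence number on general graphs~\cite{Zuckerman} in its gap form: for every $\delta>0$ it is \NP-hard to distinguish graphs $H$ on $N$ vertices with $\alpha(H)\ge N^{1-\delta}$ from those with $\alpha(H)\le N^{\delta}$. Applying the reduction to such an $H$ produces a TD graph $H'$ on $N+1$ vertices satisfying $\alpha(H')\ge N^{1-\delta}$ in the YES case and $\alpha(H')\le N^{\delta}+1\le 2N^{\delta}$ in the NO case.

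To derive the theorem for a given target $\epsilon>0$, I would argue by contradiction. Suppose a polynomial-time algorithm approximates $\alpha$ on TD graphs within a factor of $(n')^{1-\epsilon}$. Choosing $\delta<\epsilon/2$ and running this algorithm on $H'$ returns, in the YES case, an independent set of size at least
$$\frac{N^{1-\delta}}{(N+1)^{1-\epsilon}} \ge \frac{1}{2}\,N^{\epsilon-\delta},$$
whereas in the NO case every independent set of $H'$ has size at most $2N^{\delta}$. Since $\epsilon-\delta>\delta$, for all sufficiently large $N$ the YES-case guarantee strictly exceeds the NO-case bound, so thresholding the size of the returned independent set distinguishes the two cases; the finitely many small instances are decided by brute force. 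This contradicts the \NP-hardness of the gap problem, establishing the claim for every $\epsilon>0$.

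The main obstacle I anticipate is not the reduction itself---which is immediate---but the bookkeeping around the additive $+1$ shift in both $\alpha$ and $n$: one must verify that this perturbation does not dilute the multiplicative gap. This is exactly why I would reduce from the hardness with parameter $\delta$ strictly below $\epsilon/2$ and pass to large $N$, so that the surviving gap $N^{\epsilon-2\delta}$ outgrows any constant and the distinguisher is valid.
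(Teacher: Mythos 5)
Your proof is correct and takes essentially the same approach as the paper: reduce from the independence number problem on general graphs by adding a constant number of vertices to make the graph TD, observe that $\alpha$ shifts by $+1$, and invoke the general-graph inapproximability of~\cite{Zuckerman}; your explicit gap bookkeeping is sound and is simply left implicit in the paper's ``the result follows.'' The only difference is that the paper also adds a universal vertex (justified by Proposition~\ref{prop:dominating-vertex}), so that its target graph has no isolated vertices and genuinely possesses total dominating sets, whereas your $G+K_1$ is TD only vacuously via Proposition~\ref{prop:isolated-vertex} --- a cosmetic strengthening, not a gap in your argument.
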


\begin{proof}
We reduce from the independent set problem in general graphs.
Given a graph $G$, we construct a graph $G'$ from $G$ by adding to it first an isolated, and then a universal vertex.
By Propositions~\ref{prop:dominating-vertex} and~\ref{prop:isolated-vertex}, the obtained graph is TD.
It is straightforward to verify that the independence numbers of $G$ and $G'$ are related by the equation
$\alpha(G') = \alpha(G)+1$. The result follows.
\end{proof}

\begin{proposition}
For every $\epsilon >0$, it is $\NP$-hard to approximate the clique number of a given TD graph to within a factor of $n^{1-\epsilon}$.
\end{proposition}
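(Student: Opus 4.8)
The plan is to mimic almost verbatim the reduction used for the independence number, only reinterpreting the effect of the two added vertices on the clique number instead of the independence number. I would reduce from the maximum clique problem in general graphs, whose clique number is $\NP$-hard to approximate to within a factor of $n^{1-\epsilon}$~\cite{Zuckerman}. Given an arbitrary graph $G$ on $n$ vertices, I would construct $G'$ by first adding an isolated vertex, say $w$, and then adding a universal vertex, say $u$. By Proposition~\ref{prop:isolated-vertex} the intermediate graph $G + K_1$ is TD (it has an isolated vertex), and then by Proposition~\ref{prop:dominating-vertex} the graph $G'$, obtained from $G + K_1$ by adding a universal vertex, is also TD. Thus the output of the reduction is guaranteed to lie in the class of TD graphs.

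The heart of the argument is the identity $\omega(G') = \omega(G) + 1$, which I would verify directly. For the lower bound, since $u$ is adjacent to every other vertex, a maximum clique of $G$ together with $u$ is a clique of $G'$ of size $\omega(G) + 1$. For the upper bound, consider any clique $Q$ of $G'$. If $w \in Q$, then $Q \subseteq \{w, u\}$, because the only neighbor of $w$ in $G'$ is $u$; hence $|Q| \le 2 \le \omega(G) + 1$ (assuming, as we may, that $G$ is nonempty). If $w \notin Q$, then $Q \setminus \{u\}$ is a clique of $G$, so $|Q| \le \omega(G) + 1$. Combining the two bounds yields $\omega(G') = \omega(G) + 1$.

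Given this identity, I would conclude by transferring the inapproximability. Since $|V(G')| = n + 2$ and the two clique numbers differ only by the additive constant $1$, any polynomial-time algorithm approximating $\omega(G')$ to within a factor of $|V(G')|^{1-\epsilon}$ on TD graphs would immediately yield a comparable approximation of $\omega(G)$ on arbitrary graphs, contradicting~\cite{Zuckerman}. I do not expect a genuine obstacle here: the reduction is essentially identical to the one for $\alpha$, and the only substantive point is the clique-number identity above, which is elementary. The sole remaining step is the routine bookkeeping showing that the additive shift by one and the two extra vertices do not erode the $n^{1-\epsilon}$ factor, and this is standard.
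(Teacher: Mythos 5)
Your proof is correct, but it follows a different construction from the paper's. The paper reduces from clique by attaching a private (pendant) neighbor to every vertex of $G$ (after assuming $G$ has an edge and no isolated vertices); the resulting graph $G'$ then has $V(G)$ as its unique minimal total dominating set, which lets the authors exhibit an explicit total domishold structure (weight $1$ on $V(G)$, weight $0$ on the pendants, threshold $|V(G)|$), and the clique number is exactly preserved: $\omega(G') = \max\{2,\omega(G)\} = \omega(G)$. You instead recycle the isolated-vertex-plus-universal-vertex construction that the paper uses for the \emph{independence} number, invoking Propositions~\ref{prop:isolated-vertex} and~\ref{prop:dominating-vertex} to certify membership in the class, at the cost of an additive shift $\omega(G') = \omega(G)+1$. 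Both are sound: your route is more modular (TD-ness follows from already-proved closure properties rather than a hand-built weight structure, and you add only two vertices instead of doubling the graph), while the paper's route avoids any shift in the parameter, so the inapproximability transfer is completely immediate. Your final bookkeeping step (absorbing the $+1$ and the two extra vertices into the $n^{1-\epsilon}$ factor) is indeed routine for the gap instances produced by~\cite{Zuckerman}, and is at the same level of detail the paper itself uses when it writes ``The result follows'' in the independence-number proof, so there is no gap.
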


\begin{proof}
We reduce from the clique problem in general graphs.
Given a graph $G$ that is the input to the clique problem, we may assume without loss of generality that $G$ has at least one edge, and that it has no isolated vertices.
We construct a graph $G'$ from $G$ by adding a private neighbor to each vertex of $G$.
The obtained graph $G'$ has a unique minimal TD set, namely $V(G)$, and hence
setting $t = |V(G)|$ and defining $w:V(G') \to \mathbb{R}_+$ as
$$w'(x) = \left\{
            \begin{array}{ll}
              1, & \hbox{if $x\in V(G)$;} \\
              0, & \hbox{otherwise.}
            \end{array}
          \right.
$$
produces a total domishold structure of $G'$.
Hence $G'$ is TD. Furthermore, we clearly have $\omega(G') = \max\{2,\omega(G)\} = \omega(G)$, and the result follows.
\end{proof}

\begin{proposition}
For every $\epsilon >0$, it is $\NP$-hard to approximate the chromatic number of a given TD graph to within a factor of $n^{1-\epsilon}$.
\end{proposition}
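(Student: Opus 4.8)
The plan is to reuse, essentially verbatim, the construction from the proof of the preceding proposition on the clique number. I would reduce from the chromatic number problem on general graphs, which is $\NP$-hard to approximate within a factor of $n^{1-\epsilon}$ by~\cite{Zuckerman}. Given an input graph $G$, I may assume without loss of generality that $G$ has at least one edge and no isolated vertices, since edgeless graphs are trivially $1$-colorable and isolated vertices are irrelevant to the chromatic number. I then form $G'$ by attaching a private neighbor (a new pendant vertex) to each vertex of $G$.

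First I would verify that $G'$ is TD, which is immediate from the argument already given in the clique-number proof: because $G$ has no isolated vertices, the set $V(G)$ is the unique minimal total dominating set of $G'$ (each pendant forces its unique neighbor into every TD set, and $V(G)$ itself is a TD set). Assigning weight $1$ to every vertex of $G$, weight $0$ to every added leaf, and setting $t = |V(G)|$ therefore yields a total domishold structure of $G'$.

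Next I would show that the reduction preserves the chromatic number exactly, that is, $\chi(G') = \chi(G)$. The inequality $\chi(G') \geq \chi(G)$ is clear since $G$ is an induced subgraph of $G'$. For the reverse inequality, take any proper coloring of $G$ with $\chi(G)$ colors; each added leaf has a single neighbor and can be colored with any color distinct from that of its neighbor. Because $G$ has an edge we have $\chi(G) \geq 2$, so a free color is always available, and the coloring extends to all of $G'$. Hence $\chi(G') = \chi(G)$.

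The final step is to transfer the inapproximability. The construction exactly doubles the number of vertices, so $|V(G')| = 2|V(G)|$ while the chromatic number is preserved, and a hypothetical $m^{1-\epsilon'}$-approximation for $\chi$ on TD graphs (where $m = |V(G')|$) would yield an approximation of $\chi(G)$ within a factor $(2|V(G)|)^{1-\epsilon'}$, contradicting the general-graph hardness for a suitable choice of $\epsilon'$ in terms of $\epsilon$. The only mildly delicate point is this routine bookkeeping of the polynomial blow-up in the number of vertices against the exponent $1-\epsilon$; everything else mirrors the independence- and clique-number arguments and introduces no new ideas.
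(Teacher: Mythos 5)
Your proposal is correct and follows essentially the same route as the paper: reduce from chromatic number on general graphs, attach a pendant to each vertex to obtain a TD graph, and observe that $\chi(G') = \chi(G)$ (the paper writes this as $\chi(G') = \max\{2,\chi(G)\} = \chi(G)$, implicitly using your explicit assumption that $G$ has an edge). The extra detail you supply on extending the coloring to the leaves and on the factor-of-two vertex blow-up is routine and consistent with the paper's argument.
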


\begin{proof}
We reduce from the chromatic number problem in general graphs.

Given a graph $G$ that is the input to the chromatic number problem,
we may assume without loss of generality that $G$ has no isolated vertices.
We construct a graph $G'$ from $G$ by adding a private neighbor to each vertex of $G$.
As above, the obtained graph $G'$ is TD. Furthermore, we have $\chi(G') = \max\{2,\chi(G)\} = \chi(G)$, and the result follows.
\end{proof}

\end{document}